\theoremstyle{plain}
\newtheorem{theorem}{Theorem}
\newtheorem{corollary}{Corollary}
\newtheorem{proposition}{Proposition}
\newtheorem{lemma}{Lemma}
\theoremstyle{definition}
\newtheorem{definition}{Definition}
\newtheorem{example}{Example}
\newcommand\Z{{\mathbb Z}}
\renewcommand\P{{\mathcal{P}}}
\newcommand\PF{{\mathcal{PF}}}
\newcommand\RP{{\mathcal{RP}}}
\newcommand\PP{{\mathcal{PP}}}
\newcommand\PPF{{\mathcal{PPF}}}
\newcommand\PRP{{\mathcal{PRP}}}
\begin{document}

\title{Polynomial functions on the units of $\Z_{2^n}$}

\author{Smile Markovski}

\address{Ss.~Cyril and Methodius University, Faculty of Sciences,
Institute of Informatics, P.O.~Box 162, 1000 Skopje, Republic of
Macedonia\ \  \&\ \ Erasmus Mundus Scholarship, Institute of
Telematics, NTNU, Trondheim, Norway}

\email{smile@ii.edu.mk}

\author{Danilo Gligoroski}
\address{Centre for Quantifiable Quality of Service in Communication
Systems, Norwegian University of Science and Technology,
O.S.Bragstads plass 2E, N-7491 Trondheim, Norway}

\email{gligoroski@yahoo.com}

\thanks{The visit of the first two
authors to the ``Special semester on Gr\" obner bases - Gr\" obner
Bases in Cryptography, Coding Theory, and Algebraic Combinatorics'',
April 30 - May 06, 2006 in Linz, Austria, organized by RISC and
RICAM, was very helpful and stimulated some of the ideas that are
presented in this paper.}.

\author{Zoran {\v S}uni\'c}

\address{Department of Mathematics, Texas A\&M University, College Station,
TX 77843-3368, USA}

\email{sunic@math.tamu.edu}

\thanks{The third author was partially supported by NSF grant DMS-0600975}

\keywords{group, ring, polynomial, polynomial function, quasigroup}

\begin{abstract}
Polynomial functions on the group of units $Q_n$ of the ring
$\Z_{2^n}$ are considered. A finite set of reduced polynomials
$\RP_n$ in $\Z[x]$ that induces the polynomial functions on
$Q_n$ is determined. Each polynomial function on $Q_n$ is induced by
a unique reduced polynomial - the reduction being made using a
suitable ideal in $\Z[x]$. The set of reduced polynomials
forms a multiplicative 2-group. The obtained results are used to
efficiently construct families of exponential cardinality of, so
called, huge $k$-ary quasigroups, which are useful in the design of various types of
cryptographic primitives. Along the way we provide a new (and simpler) proof of a result of Rivest characterizing  the permutational polynomials on $\Z_{2^n}$. 
\end{abstract}

\maketitle

\section{Introduction}

The need for new kinds of computational methods and devices is
growing as a result of the possibility of their application in the
new developing fields in mathematics and computer science, in
particular cryptography and coding theory. Finite fields and integer
quotient rings are traditionally used for such computational needs.
The integer quotient rings are somewhat disadvantaged due to the
fact that their nonzero multiplicative structure does not form a
group (except when they happen to be fields). The structure of the
ring of polynomials over rings, and especially over integer quotient
rings, has been under investigation for almost a century. Let us mention here
chronologically some of the authors: Kempner (1921)~\cite{Kempner},
N\" obauer (1965)~\cite{Nobauer}, Keller and Olson (1968)
\cite{keller}, Mullen and Stevens (1984)~\cite{mullen}, Rivest
(2001)~\cite{rivest}, Bandini (2002)~\cite{bandini}, Zhang (2004)
\cite{Zhang}. We emphasize that the paper of Rivest~\cite{rivest} is closest to our work and his results can be inferred from ours (see Section~\ref{s:rivest}). 

We consider its group of units $Q_n$ in $\Z_{2^n}$ and define a finite set $\RP_n$ of
reduced polynomials over $\Z$ that induce the set $\PF_n$ of all polynomial
functions that keep $Q_n$ invariant. The set $\RP_n$ is a finite
2-group under polynomial multiplication modulo functional
equivalence. Exactly half of the reduced polynomials induce
permutations on $Q_n$.

The reduced polynomials are obtained by using an ideal $I_n$ in $\Z[x]$ such that every polynomial in $I_n$ induces the 0 constant function on $Q_n$ and two polynomials are functionally
equivalent over $Q_n$ if and only if they are equivalent with
respect to the ideal $I_n$. 

By using our reduction algorithms we are able to give efficient answers to several
problems. We show that there are efficient algorithms (polynomial
complexity with respect to the input parameters) for the following
problems:

(i) given a polynomial inducing a polynomial function on $Q_n$,
determine the reduced polynomial inducing the same polynomial
function,

(ii) given a polynomial inducing a permutation on $Q_n$, determine
the reduced polynomial inducing the inverse permutation.

(iii) given a polynomial inducing a polynomial function on $Q_n$,
determine the reduced polynomial for the multiplicative inverse.

In the last part of the paper we use the obtained results to
construct families of quasigroups of large cardinality. 
We define the concept of huge quasigroups as quasigroups of large order that can be handled 
effectively, in the sense that the multiplication in the
quasigroup, as well as in its adjoint operations, can be effectively
realized (polynomial complexity with respect of $\log n$, where $n$
is the order of the quasigroup). The need for permutations and
quasigroups of large (huge) orders such as $2^{16}$, $2^{32}$,
$2^{64}$, $2^{128}$, that can be easily handled is associated with
the development of the modern massively produced 32-bit and 64-bit
processors. Strong links between modern cryptography and quasigroups
(equivalently, Latin squares) have been observed by
Shannon~\cite{shannon} more than 50 years ago. Subsequently, the
cryptographic potential of quasigroups in the design of different
types of cryptographic primitives has been addressed in numerous
works. Authentication schemas have been proposed by D\`enes and
Keedwell (1992)~\cite{Denes1992}, secret sharing schemes by Cooper,
Donovan and Seberry (1994)~\cite{Cooper1994}, a version of popular
DES block cipher by using Latin squares  by Carter, Dawson, and
Nielsen (1995)~\cite{Carter1995}, different proposals for use in the
design of cryptographic hash functions by several
authors~\cite{Schnorr1994,hash2005,hash2006}, a hardware stream
cipher by Gligoroski, Markovski, Kocarev and Gusev (2005)
\cite{stream}.

We want to emphasize that the results in this work concerning
effective constructions of large quasigroups, besides in
cryptography, can also be of interest in other areas (such as coding
theory, design theory, ...).

\subsection{Organization of the content}

Well known background on the structure of the group $Q_n$ and on Hensel lifting (useful to extract inverses in $Q_n$) 
is presented in Section~\ref{s:group}. Full description of the polynomials in $\Z[x]$ that induce transformations on $Q_n$ (and the finite set of reduced polynmials that represent them) is provided in Section~\ref{s:pol}, while the polynomials in $\Z[x]$ that induce permutations on $Q_n$ are characterized in Section~\ref{s:perm}. Section~\ref{s:rivest} is a brief interlude in which we use our results to present a new proof or a result of Rivest~\cite{rivest} providing a characterization of polynomials in $\Z[x]$ that induce permutations on $\Z_{2^n}$. The group of reduced polynomials under multiplication is briefly considered in Section~\ref{s:multiplication}. Section~\ref{s:algorithmic_aspects} provides polynomial algorithms that handle construction of reduced polynomials related to interpolation, functional inversion, and multiplicative inversion. Finally, applications to effective constructions of large $k$-ary quasigroups are provided in Section~\ref{s:quasigroups}. 

\section{The group $(Q_n,\cdot)$}\label{s:group}

The integer quotient ring $(\Z_k,+,\cdot)$, where $k$ is a positive
integer, is a well known mathematical structure, where the addition
and multiplication are interpreted modulo $k$. This ring is
associative and commutative ring with a unit element $1$. Here we
are concerned solely with the case $k=2^n.$ The set
$Q_n=\{1,3,\dots,2^n-1\}$ is a subgroup of the multiplicative
semigroup $(\Z_{2^n},\cdot)$. Indeed, $Q_n$ is precisely the group
of units of $\Z_{2^n}$. Note that if $n=1$, then $Q_n$ is trivial, and if $n=2$, $Q_2 = \Z_2 = \langle -1 \rangle$. The structure of the abelian group $Q_n$, for $n \geq 3$, is
given by the following result.

\begin{proposition}\label{the group}
Let $n \geq 3$. Then $(Q_n,\cdot) \cong \Z_2 \times \Z_{2^{n-2}}$.

Moreover, $Q_n$ is generated by $-1$ and $5$, the order of -1 is 2,
and the order of 5 is $2^{n-2}$.
\end{proposition}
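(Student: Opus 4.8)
The plan is to show that the subgroup $\langle 5 \rangle$ of $Q_n$ is cyclic of order $2^{n-2}$, that $-1 \notin \langle 5 \rangle$, and then conclude by a counting/order argument that $Q_n = \langle -1 \rangle \times \langle 5 \rangle \cong \Z_2 \times \Z_{2^{n-2}}$. Since $|Q_n| = 2^{n-1}$, once we know $|\langle 5\rangle| = 2^{n-2}$ and $-1 \notin \langle 5 \rangle$, the subgroup generated by $-1$ and $5$ has order $2 \cdot 2^{n-2} = 2^{n-1}$, hence equals $Q_n$, and the product is direct because $\langle -1\rangle \cap \langle 5 \rangle$ is trivial (it is a subgroup of the order-2 group $\langle -1 \rangle$ not containing $-1$).

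The key step is to pin down the order of $5$ modulo $2^n$. I would prove by induction on $k \geq 1$ the precise statement that
\[
 5^{2^{k-1}} \equiv 1 + 2^{k+1} \pmod{2^{k+2}},
\]
or, equivalently and more usefully, that $5^{2^{k-1}} = 1 + 2^{k+1} m_k$ with $m_k$ odd. The base case $k=1$ is $5 = 1 + 2^2$, with $m_1 = 1$ odd. For the inductive step, squaring $5^{2^{k-1}} = 1 + 2^{k+1}m_k$ gives
\[
 5^{2^k} = 1 + 2^{k+2} m_k + 2^{2k+2} m_k^2 = 1 + 2^{k+2}\bigl(m_k + 2^k m_k^2\bigr),
\]
and since $k \geq 1$, the factor $m_k + 2^k m_k^2$ is odd (odd plus even), which is exactly the claim for $k+1$. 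This shows that, working modulo $2^n$, $5^{2^{n-3}} \equiv 1 + 2^{n-1} \not\equiv 1$ while $5^{2^{n-2}} \equiv 1$; since the order of $5$ must be a power of $2$ dividing $2^{n-1} = |Q_n|$, it follows that $5$ has order exactly $2^{n-2}$.

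It remains to check that $-1 \notin \langle 5 \rangle$. One clean way: every power $5^j$ is congruent to $1 \pmod 4$ (as $5 \equiv 1 \pmod 4$), whereas $-1 \equiv 3 \pmod 4$ when $n \geq 2$, so $-1$ cannot equal any power of $5$. This completes the argument, and assembling the pieces yields $(Q_n, \cdot) \cong \Z_2 \times \Z_{2^{n-2}}$, generated by $-1$ (order $2$) and $5$ (order $2^{n-2}$). The only mildly delicate point is the induction controlling the $2$-adic valuation of $5^{2^{k-1}} - 1$; everything else is routine once that is in hand.
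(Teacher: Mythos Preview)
Your proof is correct and follows the same overall structure as the paper's: establish that $5$ has order $2^{n-2}$, observe that $-1\equiv 3\pmod 4$ lies outside $\langle 5\rangle$, and conclude by counting. The only difference is in how the order of $5$ is computed: the paper expands $5^{2^{n-3}}=(1+4)^{2^{n-3}}$ by the binomial theorem and bounds the $2$-adic valuation of each term using $\nu_2(i!)<i$, whereas you run an induction by repeated squaring to track $\nu_2(5^{2^{k-1}}-1)$ exactly. Both arrive at $5^{2^{n-3}}\equiv 1+2^{n-1}\pmod{2^n}$; your squaring argument is a bit more self-contained (it avoids the estimate on $\nu_2(i!)$), while the paper's binomial approach connects with the $t_i$ quantities used later in the article.
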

\begin{proof}
The subset $F_n \subseteq Q_n$ of numbers of the form $4k+1$ forms a
subgroup of index 2 in $Q_n$. Since $5 \in F_n$, we have
$5^{2^{n-2}}=1$ in $Q_n$. On the other hand,
\[
 5^{2^{n-3}} = (4+1)^{2^{n-3}} = \sum_{i=0}^{2^{n-3}}
 \binom{2^{n-3}}{i}2^{2i}.
\]
The highest power of 2 dividing $i!$ is $\lfloor i/2 \rfloor +
\lfloor i/4 \rfloor + \cdots < i/2+i/4+\cdots = i$. Thus each of the
terms $\binom{2^{n-3}}{i}2^{2i}$ is divisible by $2^{n-3+2i-(i-1)} =
2^{n-2+i}$ and we have
\begin{equation}
 5^{2^{n-3}} \equiv 1+2^{n-3}\cdot 2^2 \equiv 2^{n-1}+1 \pmod{2^n}.
\end{equation}
Therefore $5^{2^{n-3}} \neq 1$ in $Q_n$, the order of 5 is
$2^{n-2}$, and $F_n$ is a cyclic group generated by 5.

The order of -1 is clearly 2. Since -1 is not in $F_n$ (it has the
form $4k+3$) we have that $Q_n = \langle -1 \rangle \times \langle 5
\rangle = \Z_2 \times \Z_{2^{n-2}}$.
\end{proof}

\begin{corollary}
Let $n \geq 3$. The multiplicative order of every $a \in Q_n$ divides $2^{n-2}$.
\end{corollary}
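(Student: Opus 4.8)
The plan is to deduce this directly from Proposition~\ref{the group}. By that result, for $n \geq 3$ we have an isomorphism $Q_n \cong \Z_2 \times \Z_{2^{n-2}}$, so it suffices to bound the order of an arbitrary element of $\Z_2 \times \Z_{2^{n-2}}$. In a direct product of (finite) cyclic groups, the order of an element $(u,v)$ is the least common multiple of the orders of $u$ and $v$; here $\mathrm{ord}(u)$ divides $2$ and $\mathrm{ord}(v)$ divides $2^{n-2}$. Since $n \geq 3$, we have $2^{n-2} \geq 2$, hence $2 \mid 2^{n-2}$, and therefore $\mathrm{lcm}(\mathrm{ord}(u),\mathrm{ord}(v))$ divides $2^{n-2}$. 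This gives the claim.

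Equivalently, and perhaps more transparently, I would argue concretely using the generators furnished by Proposition~\ref{the group}: every $a \in Q_n$ can be written as $a = (-1)^i 5^j$ for suitable integers $i,j$. Raising to the power $2^{n-2}$ and using that $2^{n-2}$ is even (again because $n \geq 3$) together with $5^{2^{n-2}} = 1$ in $Q_n$ yields $a^{2^{n-2}} = (-1)^{i\cdot 2^{n-2}} \cdot (5^{2^{n-2}})^j = 1$. Consequently the order of $a$ divides $2^{n-2}$.

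There is essentially no obstacle here; the only point requiring the hypothesis $n \geq 3$ is the elementary observation that then $2 \mid 2^{n-2}$, which is exactly what lets the factor of $2$ coming from $\langle -1 \rangle$ be absorbed into $2^{n-2}$. (For $n = 1, 2$ the statement would need separate, trivial handling, but these cases are excluded.)
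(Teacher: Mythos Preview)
Your proof is correct and matches the paper's approach: the corollary is stated in the paper without proof, as an immediate consequence of Proposition~\ref{the group}, and both of your arguments make that deduction explicit. There is nothing to add.
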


Given a large value of $n$ and $a\in Q_n$, can we effectively find
the inverse $a^{-1}$? Note that if we express $a$ as $a=(-1)^i \cdot
5^j$, for some $i\in \{0,1\}$, $ j\in \{0,1,\dots,2^{n-2}-1\}$, then
its inverse in $Q_n$ is given by
\[ a^{-1} = (-1)^i \cdot 5^{2^{n-2}-j}.\]
However, this requires representing $a$ in the form $a=(-1)^i
\cdot 5^j$, for some $i\in \{0,1\}$. It is fairly easy to decide
if $i=0$ or $i=1$. Indeed, $i=0$ when $a$ is of the form $4k+1$
and $i=1$ otherwise. However, to determine $j$ we need to solve a
discrete logarithm problem of the type $5^x = a \pmod{2^n}$. This
apparent difficulty can be sidestepped by calculating the inverse
by applying Hensel lifting~\cite{Perron48} (also known as
Newton-Hensel lifting~\cite{Kaltofen85}).

The basic idea is to use binary representation of the integers
modulo $2^n$. Given $r\in \mathbb{Z}_{2^n}$, its binary
representation is $r_{n-1}r_{n-2}\dots r_1r_0$, where
$r_j\in\{0,1\}$ is the $(j+1)-$th bit of $r$. In the same way, the
binary representation of a variable $x$ is given by
$x_{n-1}x_{n-2}\dots x_1x_0$, where $x_j$ are bit variables. Now,
let $r$ be a root of the polynomial $P(x)$. Then $P(x)=(x-r)S(x)$
for some polynomial $S(x)$. The equality $P(x)=(x-r)S(x)$ in the
ring $\mathbb{Z}_{2^k}$, where $k<n$, is given by
\[
 P(x_{k-1}\dots x_1x_0) =
  (x_{k-1}\dots x_1x_0-r_{k-1}\dots r_1r_0)S(x_{k-1}\dots x_1x_0).
\]
The last equality shows that if we want to find the $k$ least
significant bits of a root $r$ of $P(x)$, we need to consider the
equation $P(x)=0$ in the ring $\mathbb{Z}_{2^k}$.

One variant of the Hensel lifting algorithm for finding a root of
$P(x)$ is the following:\\

{\it Step $1$: Determine a bit $r_0$ such that $P(r_0)=0$ in
$\mathbb{Z}_2$.}\\

This can be accomplished simply by checking if $P(0)=0$ or $P(1)=0$ (or both!) in $\Z_2$.\\

Let the bits $r_0,\dots,r_{k-1}$ be already chosen in {\it Step 1 - Step $k$.}\\

{\it Step $k+1$: Determine a bit $r_k$ such that $P(r_k
r_{k-1}\dots r_0)=0$ in $\mathbb{Z}_{2^{k+1}}$.}\\

Since the bits $r_0,\dots,r_{k-1}$ are known, this can be
accomplished by checking if $P(0r_{k-1}\dots r_0)=0$
or $P(1r_{k-1}\dots r_0)=0$ (or both) in $\Z_{2^{k+1}}$.\\

{\it The algorithm stops after Step $n$.}\\

In order to find all roots of a polynomial one has to follow all the
branching points of the algorithm (whenever both 0 and 1 are good
choices one has to follow both choices, and whenever neither 0 nor 1
are good choices one discards that particular branch of the search).

Given $a\in Q$, the root of the polynomial $ax-1$ is the
inverse of $a$. In this case, the above algorithm has polynomial
complexity in $n$, since there is only one root and the above
algorithm will produce the unique correct bit of $a^{-1}$ at each
step (there is no branching).

\section{Polynomial functions on $Q_n$}\label{s:pol}

Every polynomial $P(x)$ from the polynomial ring $\Z[x]$
induces a polynomial function $p: \Z_{2^n} \to \Z_{2^n}$ by the
evaluation map (taken modulo $2^n$). We are interested here in polynomial functions on
$Q_n$, i.e., polynomial functions $p:Q_n \to Q_n$ induced by
polynomials $P(x)$ in $\Z[x]$ such that $p(Q_n)\subseteq Q_n$.
Denote by $\P_n$ the set of polynomials in $\Z[x]$ that induce
polynomial function on $Q_n$ and denote by $\PF_n$ the set of
corresponding polynomial functions on $Q_n$. We implicitly assume
that $n \geq 2$ (as was already mentioned, $Q_1$ is trivial).

We first determine precisely the polynomials over $\Z$ that
induce polynomial functions on $Q_n$, i.e., we determine $\P_n$.

\begin{proposition}\label{p_n}
Let $P(x) = a_0+a_1x+\dots+a_dx^d$ be a polynomial in $\Z[x]$.
Then $P(x)$ is in $\P_n$ (i.e.~$P(x)$ induces a polynomial function
on $Q_n$) if and only if the sum of the coefficients $a_0 + a_1 +
\dots + a_d$ is odd, which, in turn, is equivalent to the condition
that $p(1)$ is odd.
\end{proposition}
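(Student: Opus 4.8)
The plan is to prove the two equivalences in turn, starting with the easy direction. First I would note that for any $P(x) = a_0 + a_1 x + \dots + a_d x^d$, evaluating at $x = 1$ gives $P(1) = a_0 + a_1 + \dots + a_d$, so the conditions ``$a_0 + a_1 + \dots + a_d$ is odd'' and ``$p(1)$ is odd'' are literally the same statement; the content of the proposition is the equivalence of either of these with $P(x) \in \P_n$. Since $1 \in Q_n$, if $P$ induces a function on $Q_n$ then $p(1) \in Q_n$, hence $p(1)$ is odd; this gives one direction immediately.

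For the converse, suppose the coefficient sum $s := a_0 + a_1 + \dots + a_d$ is odd. I must show $p(a) \in Q_n$ for every $a \in Q_n$, i.e.\ that $p(a)$ is odd whenever $a$ is odd. The key observation is a parity (mod $2$) argument: if $a$ is odd then $a \equiv 1 \pmod 2$, so $a^i \equiv 1 \pmod 2$ for every $i \geq 0$, and therefore
\[
  p(a) = \sum_{i=0}^{d} a_i a^i \equiv \sum_{i=0}^{d} a_i \equiv s \equiv 1 \pmod 2.
\]
Thus $p(a)$ is odd, so $p(a) \in \Z_{2^n}$ is a unit, i.e.\ $p(a) \in Q_n$. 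This shows $P(x) \in \P_n$ and completes the equivalence.

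There is essentially no hard part here: the whole argument reduces to the fact that reduction modulo $2^n$ followed by reduction modulo $2$ agrees with direct reduction modulo $2$, so membership in $Q_n$ is detected entirely by the parity of the value, and parity of polynomial values at odd arguments depends only on the parity of the coefficient sum. The only point requiring a word of care is the logical structure: I would present it as ($P \in \P_n$) $\Rightarrow$ ($p(1)$ odd) $\Rightarrow$ ($s$ odd) $\Rightarrow$ ($p(a)$ odd for all odd $a$) $\Rightarrow$ ($P \in \P_n$), which closes the loop and yields all three stated conditions are equivalent simultaneously.
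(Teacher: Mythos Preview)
Your proof is correct and follows essentially the same approach as the paper: both arguments reduce to the observation that for odd $a$ every power $a^i$ is odd, so the parity of $p(a)$ equals the parity of the coefficient sum. You simply spell out the logical structure (the cycle of implications through $p(1)$) more explicitly than the paper does.
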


\begin{proof}
For every odd number $a$, all the powers $a^i$, $i=0,\dots,d$ are
also odd. Thus the parity of $p(a) = a_0 + a_1a + \dots + a_d a^d$
is equal to the parity of $a_0 + \dots + a_d$.
\end{proof}

The finite set $\PF_n$ of polynomial functions on $Q_n$ is induced
by the infinite set of polynomials in $\P_n$. We will determine a
finite set of polynomials, that induce all polynomial functions in
$\PF_n$. In order to define this set, we need some preliminary
definitions.

For an integer $i$, define $t_i = \lfloor i/2 \rfloor + \lfloor i/4
\rfloor + \lfloor i/8 \rfloor + \dots$, i.e., $t_i$ is the largest
integer $\ell$ such that $2^\ell$ divides $i!$. Let $d_n$ be the largest
integer $i$ such that $n-i-t_i$ is positive.

\begin{definition}
A polynomial $P(x)=a_0+a_1x+\dots+a_dx^d$ in $\P_n$ is called
\emph{reduced} if

(i) the degree of $P(x)$ is no higher than $d_n$.

(ii) $0 \leq a_i \leq 2^{n-i-t_i}-1$, for $i=0,\dots,d_n$.

Denote the set of reduced polynomials in $\P_n$ by $\RP_n$.
\end{definition}

\begin{proposition}
The number of reduced polynomials in $\RP_n$ is
\[
 |\RP_n|= 2^{(2n-d_n)(d_n+1)/2-1-\sum_{i=0}^{d_n} t_i}.
\]
\end{proposition}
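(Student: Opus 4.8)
The plan is to count the tuples of coefficients directly, in two stages: first ignore the membership condition in $\P_n$ and count all tuples $(a_0,a_1,\dots,a_{d_n})$ satisfying conditions (i) and (ii) of the definition of reduced polynomial, and then account for the parity condition separately. Since the bound on $a_i$ depends only on $i$ and not on the other coefficients, the coefficients can be chosen independently, so the number of tuples satisfying (i) and (ii) is
\[
 \prod_{i=0}^{d_n} 2^{\,n-i-t_i} = 2^{\sum_{i=0}^{d_n}(n-i-t_i)}.
\]

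The next step is to simplify the exponent. Using $\sum_{i=0}^{d_n} i = d_n(d_n+1)/2$, we obtain
\[
 \sum_{i=0}^{d_n}(n-i-t_i) = (d_n+1)n - \frac{d_n(d_n+1)}{2} - \sum_{i=0}^{d_n} t_i = \frac{(2n-d_n)(d_n+1)}{2} - \sum_{i=0}^{d_n} t_i,
\]
which is exactly one more than the exponent appearing in the claimed formula.

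Finally, I would impose membership in $\P_n$. By Proposition~\ref{p_n}, a polynomial satisfying (i) and (ii) lies in $\P_n$ if and only if $a_0 + a_1 + \dots + a_{d_n}$ is odd. Fixing any admissible values of $a_1,\dots,a_{d_n}$, the coefficient $a_0$ still ranges over the full set $\{0,1,\dots,2^{n}-1\}$ (note $n - 0 - t_0 = n \ge 2$), and exactly $2^{n-1}$ of those $2^{n}$ values give $a_0$ the parity needed to make the total sum odd. Hence precisely half of the tuples counted above correspond to polynomials in $\P_n$, which divides the count by $2$ and subtracts $1$ from the exponent, giving the stated value of $|\RP_n|$.

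The argument is essentially a routine counting computation; the one point that deserves a remark — and the only thing one must be careful about — is that each factor $2^{\,n-i-t_i}$ is genuinely even, i.e.\ $n-i-t_i \ge 1$ for all $0 \le i \le d_n$, so that the halving in the last step is legitimate. This holds because $i+t_i$ is nondecreasing in $i$ (both $i$ and $t_i$ are nondecreasing), hence $n-i-t_i$ is nonincreasing in $i$, and it is positive at $i=d_n$ by the very definition of $d_n$; therefore it is at least $1$ for every $i \le d_n$.
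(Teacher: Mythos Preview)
Your proof is correct and follows essentially the same approach as the paper: count all coefficient tuples satisfying (i) and (ii) as $2^{\sum_{i=0}^{d_n}(n-i-t_i)}$, simplify the exponent, and then halve by the parity condition on the sum of coefficients via the freedom in $a_0$. The paper phrases the halving as an explicit matching $P(x)\leftrightarrow P(x)\pm 1$ rather than a counting argument, but the idea is the same; your extra remark that $n-i-t_i\ge 1$ for all $i\le d_n$ is true but not actually needed for your version of the halving, since you only vary $a_0$.
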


\begin{proof}
The number of polynomial of degree at most $d_n$ with restrictions
on the coefficients given by (ii) is
\[
 2^{\sum_{i=0}^{d_n} n-i-t_i} = 2^{n(d_n+1)-d_n(d_n+1)/2-\sum_{i=0}^{d_n} t_i}.
\]
Exactly half of such polynomials also satisfies the condition
required by Proposition~\ref{p_n} on the parity of the sum of the
coefficients. Indeed, we can match up any polynomial $P(x)=a_0+a_1x+ \dots + a_dx^d$ in
that satisfies the conditions (i) and (ii) with the
polynomial $P(x)+1$ if $a_0$ is even and with $P(x)-1$ if $a_0$ is odd. In both cases, the obtained polynomial also satisfies the conditions (i) and (ii). In such
a matching exactly one polynomial in each pair has odd sum of
coefficients.
\end{proof}

Two polynomials $P(x)$ and $T(x)$ in $\P_n$ are said to be
\emph{functionally equivalent} over $Q_n$ if they induce the same
polynomial function on $Q_n$. In that case we write $P(x) \approx
T(x)$. Clearly, $\approx$ is an equivalence relation on $\P_n$.

The polynomials $P(x)$ and $T(x)$ are functionally equivalent over
$Q_n$ if and only if the difference $P(x)-T(x)$ induces the constant
0 function on $Q_n$. With this in mind, we define now a finite set
of polynomials over $\Z$ that induce the 0 constant function
on $Q_n$.

\begin{definition}
For $i = 0,\dots,d_n$, define the polynomial
\[ P_{n,i}(x) = 2^{n-i-t_i}(x+1)(x+3)\dots (x+2i-1) \]
of degree $i$. When $i=0$ the understanding is that $P_{n,0} = 2^n$.
Define also the polynomial
\[ P_{n,d_n+1}(x) = (x+1)(x+3)\dots (x+2d_n+1)\]
of degree $d_n+1$.
\end{definition}

Denote the ideal generated by $P_{n,i}(x)$, $i=0,\dots,d_n+1$, in
$\Z[x]$ by $I_n$. Thus 
\[
 I_n=
 \left\{ \sum_{i=0}^{d_n+1} S_i(x)P_{n,i}(x) \mid S_i(x) \in \Z[x], \ i=0,\dots,d_n+1 \right\}. 
\]

\begin{proposition}\label{back}
Every polynomial in $I_n$ induces the 0 constant function on $Q_n$.
\end{proposition}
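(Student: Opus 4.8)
The plan is to show that each generator $P_{n,i}(x)$ of the ideal $I_n$ induces the $0$ constant function on $Q_n$; since $I_n$ consists of $\Z[x]$-linear combinations of these generators, and the set of polynomials inducing the $0$ function on $Q_n$ is closed under addition and under multiplication by arbitrary polynomials in $\Z[x]$ (it is itself an ideal, being the kernel of the evaluation homomorphism $\Z[x] \to (\text{functions } Q_n \to \Z_{2^n})$), this suffices. So the whole argument reduces to evaluating $P_{n,i}(a)$ modulo $2^n$ for an arbitrary odd $a$.

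For the generators $P_{n,i}(x) = 2^{n-i-t_i}(x+1)(x+3)\cdots(x+2i-1)$ with $0 \le i \le d_n$, the key observation is that when $a$ is odd, the numbers $a+1, a+3, \dots, a+2i-1$ are $i$ consecutive even integers, namely $2(\tfrac{a+1}{2}), 2(\tfrac{a+1}{2}+1), \dots, 2(\tfrac{a+1}{2}+i-1)$. Their product is therefore $2^i \cdot m(m+1)\cdots(m+i-1) = 2^i \cdot i! \cdot \binom{m+i-1}{i}$ where $m = (a+1)/2$. The product of $i$ consecutive integers is divisible by $i!$, and $i!$ contributes exactly $2^{t_i}$ to the power of $2$ by the definition of $t_i$. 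Hence $(a+1)(a+3)\cdots(a+2i-1)$ is divisible by $2^{i+t_i}$, and multiplying by the leading coefficient $2^{n-i-t_i}$ gives something divisible by $2^n$. Thus $P_{n,i}(a) \equiv 0 \pmod{2^n}$. (The case $i=0$ is immediate since $P_{n,0} = 2^n$.)

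For the top generator $P_{n,d_n+1}(x) = (x+1)(x+3)\cdots(x+2d_n+1)$, the same reasoning applies with $i = d_n+1$: for odd $a$ this product equals $2^{d_n+1}(d_n+1)!\binom{m+d_n}{d_n+1}$, which is divisible by $2^{(d_n+1)+t_{d_n+1}}$. By the definition of $d_n$ as the largest $i$ with $n - i - t_i > 0$, we have $n - (d_n+1) - t_{d_n+1} \le 0$, i.e. $(d_n+1) + t_{d_n+1} \ge n$, so again $P_{n,d_n+1}(a)$ is divisible by $2^n$ and vanishes in $Q_n$.

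The only genuinely nontrivial point — and hence the main thing to get right — is the $2$-adic valuation bookkeeping: that the product of $i$ consecutive odd-shifted terms $a+1,\dots,a+2i-1$ carries valuation at least $i + t_i$, which hinges on factoring out the $2^i$ from the $i$ consecutive even numbers and then invoking $i! \mid (\text{product of } i \text{ consecutive integers})$ together with $v_2(i!) = t_i$. Everything else is formal (closure of the kernel ideal) or definitional (the choice of exponents $n-i-t_i$ and of $d_n$ is precisely engineered to make the valuation reach $n$). I would state the consecutive-integer divisibility as a one-line lemma or simply cite it inline, and then the proof is essentially three short computations, one per family of generators.
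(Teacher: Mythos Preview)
Your proof is correct and follows essentially the same route as the paper: both reduce to showing each generator $P_{n,i}$ vanishes on $Q_n$ by observing that $(a+1)(a+3)\cdots(a+2i-1)$ is a product of $i$ consecutive even integers and hence divisible by $2^i i! = 2^{i+t_i}$, then combining with the leading coefficient (or, for $i=d_n+1$, with the inequality $i+t_i \ge n$). Your write-up is slightly more explicit about the ideal structure and the binomial-coefficient factorization, but the argument is the same.
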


\begin{proof}
What we need to prove is that, for every $x \in Q_n$
\[ p_{n,i}(x) \equiv 0 \pmod{2^n}. \]
This is clear since, for any  $x \in Q_n$ the product $(x+1)(x+3)
\dots (x+2i-1)$ is a product of $i$ consecutive even numbers and it
is therefore divisible by $2^i i!$, implying that it is divisible by
$2^{i+t_i}$. For $i=0,\dots,d_n$ we then have that $p_{n,i}(x)$ is
divisible by $2^{n-i-t_i}\cdot 2^{i+t_i} = 2^n$. For $i=d_n+1$, we
have that $n \leq i+t_i$, and therefore $2^n$ divides $p_{n,i}(x)$
in this case as well.
\end{proof}

We state now the two main results of this section. 

\begin{theorem}\label{functional equivalence}
Two polynomials $P(x)$ and $T(x)$ in $\P_n$ are functionally
equivalent over $Q_n$ if and only if $P(x)-T(x)$ is a member of
$I_n$.
\end{theorem}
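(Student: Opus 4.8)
The plan is to prove the two directions of the equivalence separately. The ``if'' direction is immediate from Proposition~\ref{back}: if $P(x) - T(x) \in I_n$, then $P(x) - T(x)$ induces the constant $0$ function on $Q_n$, so $P(x)$ and $T(x)$ induce the same function on $Q_n$, i.e.\ $P(x) \approx T(x)$. All the work is in the converse.

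\medskip

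For the ``only if'' direction, suppose $P(x) \approx T(x)$ and set $D(x) = P(x) - T(x)$, so that $D(x)$ induces the $0$ function on $Q_n$; I must show $D(x) \in I_n$. First I would use the generators $P_{n,d_n+1}(x), P_{n,d_n}(x), \dots, P_{n,1}(x)$ of $I_n$ to perform a division-with-remainder reduction on $D(x)$: since $P_{n,d_n+1}(x)$ is monic of degree $d_n+1$, I can subtract a $\Z[x]$-multiple of it to reduce the degree of $D(x)$ below $d_n+1$; then, working from the top coefficient down, I can use $P_{n,i}(x)$ (whose leading term is $2^{n-i-t_i}x^i$) to reduce the coefficient of $x^i$ modulo $2^{n-i-t_i}$, for $i = d_n, d_n-1, \dots, 1$. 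Note the falling-factorial shape of the $P_{n,i}$ means this reduction naturally proceeds in the ``Newton forward-difference'' basis $(x+1)(x+3)\cdots(x+2i-1)$ rather than the monomial basis, but that is a harmless change of basis over $\Z$ since each $P_{n,i}$ contributes its full factorial factor. The outcome is a polynomial $R(x) = D(x) - (\text{element of }I_n)$ with $\deg R \le d_n$ and with the coefficient of $x^i$ in its falling-factorial expansion lying in $\{0, 1, \dots, 2^{n-i-t_i}-1\}$; moreover $R(x)$ still induces the $0$ function on $Q_n$ because every generator we subtracted does (Proposition~\ref{back}). It now suffices to show $R(x) = 0$.

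\medskip

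To show $R(x) = 0$, write $R(x) = \sum_{i=0}^{d_n} c_i (x+1)(x+3)\cdots(x+2i-1)$ with $0 \le c_i < 2^{n-i-t_i}$, and suppose for contradiction that some $c_i$ is nonzero; let $k$ be the smallest index with $c_k \neq 0$. The idea is to evaluate $R$ at cleverly chosen units to isolate $c_k \bmod 2^{n-k-t_k}$. For $j < k$ the term $(x+1)\cdots(x+2j-1)$ is a product of $j$ consecutive even numbers when $x$ is odd, hence divisible by $2^{j+t_j} \ge 2^{k-1+t_{k-1}}$ --- actually I want a cleaner statement: plug in $x = -1$, which kills every factor $(x+1)$ and hence every term with $i \ge 1$, giving $R(-1) = c_0$, forcing $c_0 \equiv 0 \pmod{2^n}$, hence $c_0 = 0$; next plug in $x = -3$ to kill all terms with $i \ge 2$ (they contain the factor $(x+3)$) to get $R(-3) = c_0 + c_1(-3+1) = -2c_1$, forcing $2c_1 \equiv 0 \pmod {2^n}$, hence $c_1 < 2^{n-1}$ gives $c_1 = 0$; in general, plugging in $x = -(2k+1)$ annihilates every term with $i \ge k+1$, and having already shown $c_0 = \dots = c_{k-1} = 0$ we get $R(-(2k+1)) = c_k \cdot \prod_{\ell=1}^{k}(-(2k+1)+2\ell-1) = c_k \cdot \prod_{\ell=1}^{k}(2\ell - 2k - 2) = c_k \cdot (-2)^k (k-1)!\,k/\,\cdots$, an integer whose $2$-adic valuation is exactly $k + t_k$ times $c_k$; since $R$ vanishes on $Q_n$ and $-(2k+1) \in Q_n$, this product is $\equiv 0 \pmod{2^n}$, which combined with $0 \le c_k < 2^{n-k-t_k}$ forces $c_k = 0$, a contradiction. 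Hence all $c_i = 0$, so $R(x) = 0$ and $D(x) \in I_n$, as desired.

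\medskip

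The main obstacle --- and the point requiring genuine care rather than routine bookkeeping --- is the last step: verifying that the $2$-adic valuation of the product $\prod_{\ell=1}^{k}\big(2\ell - (2k+1) - 1\big)$ (the value at $x = -(2k+1)$ of the $k$-th falling-factorial basis element) is \emph{exactly} $k + t_k$, so that the divisibility condition $2^n \mid c_k \cdot 2^{k+t_k} u$ (with $u$ odd) together with the range restriction $c_k < 2^{n-k-t_k}$ pins down $c_k = 0$. This is essentially the same valuation computation already carried out in Proposition~\ref{the group} and in the proof of Proposition~\ref{back} (the highest power of $2$ dividing $k!$ is $t_k$, and a product of $k$ consecutive even integers contributes an extra factor $2^k$), so the estimate is available, but one must check that the relevant product is a product of exactly $k$ consecutive even integers (it is: $-(2k+1)+1, -(2k+1)+3, \dots, -(2k+1)+(2k-1)$ are $-2k, -2k+2, \dots, -2$) and that no cancellation with $t_k$ occurs. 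Once that valuation is nailed down, the induction on $k$ closes cleanly and the theorem follows.
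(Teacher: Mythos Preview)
Your proof is correct and takes a genuinely different route from the paper. Both arguments handle sufficiency identically via Proposition~\ref{back}, and both reduce $D(x)=P(x)-T(x)$ modulo $I_n$ to a normal form of degree at most $d_n$ with bounded coefficients (your reduction step is essentially the paper's Proposition~\ref{reduced}, though your exposition momentarily conflates monomial and falling-factorial coefficients --- the clean reading is: divide by the monic $P_{n,d_n+1}$ to drop the degree, write the remainder in the basis $B_i(x)=(x+1)(x+3)\cdots(x+2i-1)$, then reduce each $B_i$-coefficient modulo $2^{n-i-t_i}$ by subtracting an integer multiple of $P_{n,i}=2^{n-i-t_i}B_i$). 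The real divergence is in showing that a normal-form polynomial inducing the zero function on $Q_n$ must be identically zero. The paper evaluates at $1,3,\dots,2d_n+1$, obtains a Vandermonde system, and proves a separate technical lemma (Lemma~\ref{row reduction}) row-reducing that Vandermonde matrix over $\Z$ to an upper-triangular form with diagonal entries $2^i i!$; uniqueness then follows by back-substitution. You instead exploit the falling-factorial basis and evaluate at $-1,-3,\dots,-(2d_n+1)$: since $B_i(-(2k+1))=0$ whenever $i>k$, the resulting system is \emph{automatically} triangular, and the exact computation $B_k(-(2k+1))=(-2)(-4)\cdots(-2k)=(-1)^k 2^k k!$, of $2$-adic valuation precisely $k+t_k$, forces each $c_k=0$ by induction. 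Your approach is more elementary --- a good choice of basis and evaluation points replaces the paper's Vandermonde row-reduction lemma entirely --- while the paper's approach, being tied to evaluation at $1,3,\dots,2d_n+1$, feeds directly into the interpolation algorithms of Section~\ref{s:algorithmic_aspects}.
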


\begin{theorem}\label{t:unique}
Every polynomial function in $\PF_n$ is induced by a unique reduced
polynomial in $\RP_n$. 
\end{theorem}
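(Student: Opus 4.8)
The plan is to prove the two assertions of Theorem~\ref{t:unique} — existence and uniqueness of a reduced polynomial representative — by leveraging Theorem~\ref{functional equivalence}, which reduces the problem to a statement about the ideal $I_n$. The strategy is essentially a division/reduction argument modulo the generators $P_{n,i}(x)$ of $I_n$. First I would establish \emph{existence}: starting from an arbitrary $P(x) \in \P_n$, I would show that by repeatedly subtracting suitable $\Z[x]$-multiples of the $P_{n,i}(x)$ one arrives at a reduced polynomial that is functionally equivalent to $P(x)$ over $Q_n$. Concretely, $P_{n,d_n+1}(x)$ is monic of degree $d_n+1$, so ordinary polynomial division lets me replace $P(x)$ by an equivalent polynomial of degree at most $d_n$ (this uses that $P_{n,d_n+1}$ is monic, so the division stays in $\Z[x]$). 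Then, working from the top coefficient $a_{d_n}$ down to $a_0$, each $P_{n,i}(x)$ has the form $2^{n-i-t_i}x^i + (\text{lower order terms})$, so subtracting an integer multiple of $P_{n,i}(x)$ reduces the coefficient $a_i$ modulo $2^{n-i-t_i}$ into the range $\{0,\dots,2^{n-i-t_i}-1\}$ without disturbing the already-normalized higher coefficients. After processing all indices we obtain a polynomial satisfying (i) and (ii); it lies in $\P_n$ by Proposition~\ref{p_n} because the parity of $p(1)$ is preserved (the modifications change $P$ by an element of $I_n$, which induces the $0$ function by Proposition~\ref{back}, so in particular vanishes at $1 \in Q_n$).

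Next I would prove \emph{uniqueness}. Suppose $P(x)$ and $T(x)$ are both reduced and $P(x) \approx T(x)$. By Theorem~\ref{functional equivalence}, $D(x) := P(x) - T(x) \in I_n$, and I must show $D(x) = 0$. The coefficient constraints (ii) on $P$ and $T$ give, for each $i \in \{0,\dots,d_n\}$, that the coefficient of $x^i$ in $D(x)$ lies in the interval $(-2^{n-i-t_i},\,2^{n-i-t_i})$, and $\deg D \le d_n$. It therefore suffices to show that a nonzero element of $I_n$ of degree at most $d_n$ must have its leading coefficient (say in degree $j$) divisible by $2^{n-j-t_j}$, which is then incompatible with the interval bound unless that coefficient is $0$; iterating downward forces $D = 0$. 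To see the divisibility claim, I would argue that every element of $I_n$ induces the $0$ function on $Q_n$ (Proposition~\ref{back}), hence also on the larger set of \emph{all} odd residues; more useful is the finite-difference viewpoint: a polynomial of degree $\le d_n$ vanishing at enough points of an arithmetic progression of even-spaced arguments (the shifts $x \mapsto x + $ odd that appear in the $P_{n,i}$) has highly $2$-divisible coefficients. Cleanest is to observe that $I_n$, intersected with polynomials of degree $\le d_n$, is exactly the $\Z$-span of $P_{n,0},\dots,P_{n,d_n}$ (the generator $P_{n,d_n+1}$ only contributes in degree $\ge d_n+1$, and multiplying any $P_{n,i}$ by a nonconstant polynomial raises the degree — this needs a small argument that no cancellation of top degrees among such products can land back in degree $\le d_n$ without the combination already being a $\Z$-combination of the $P_{n,i}$ themselves). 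Since the $P_{n,i}(x)$ for $i=0,\dots,d_n$ form a triangular system (the $x^i$-coefficient of $P_{n,i}$ is $2^{n-i-t_i}$ and $P_{n,i}$ has degree exactly $i$), the $x^j$-coefficient of any $\Z$-combination $\sum_{i=0}^{d_n} c_i P_{n,i}(x)$ is $c_j 2^{n-j-t_j} + (\text{contributions from } c_i, i>j)$; a downward induction on $j$ then shows each such coefficient is divisible by $2^{n-j-t_j}$, completing the argument.

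The main obstacle I anticipate is the structural claim that $I_n \cap \{\deg \le d_n\}$ equals the $\Z$-span of $\{P_{n,i} : 0 \le i \le d_n\}$ — equivalently, that an element $\sum_{i} S_i(x) P_{n,i}(x)$ with nonconstant $S_i$'s cannot have total degree $\le d_n$ except by producing a combination already expressible with \emph{constant} coefficients $S_i$. One has to rule out the scenario where high-degree terms coming from $S_i \cdot P_{n,i}$ and $S_j \cdot P_{n,j}$ cancel. I would handle this by noting $P_{n,d_n+1}$ is monic, so modulo $P_{n,d_n+1}$ every element of $I_n$ has a canonical representative of degree $\le d_n$, and then checking that the images of $P_{n,0},\dots,P_{n,d_n}$ already generate (as a $\Z$-module) the image of $I_n$ in $\Z[x]/(P_{n,d_n+1})$ — which reduces to a bookkeeping argument on how multiplication by $x$ interacts with the divisibility exponents $n-i-t_i$ (using $t_{i+1} = t_i + (\text{power of }2\text{ in }(i+1))$ and the fact that $x\cdot P_{n,i}(x)$ relates to $P_{n,i+1}(x)$). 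Once that lemma is in place, both halves of the theorem follow from the triangularity of the generating system as sketched above.
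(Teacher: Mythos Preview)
Your existence argument is essentially the paper's Proposition~\ref{reduced}: reduce the degree using the monic generator $P_{n,d_n+1}$, then normalize coefficients top-down using the triangular shape of the $P_{n,i}$. That part is fine and matches the paper.

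The uniqueness argument, however, is circular. You invoke Theorem~\ref{functional equivalence} to pass from ``$P \approx T$'' to ``$P - T \in I_n$'', but in the paper the necessity direction of Theorem~\ref{functional equivalence} is proved \emph{using} Theorem~\ref{t:unique} (see the short proof that appears right after the uniqueness part of Theorem~\ref{t:unique}). So you cannot cite it here. Your structural lemma --- that $I_n \cap \{\deg \le d_n\}$ equals the $\Z$-span of $P_{n,0},\dots,P_{n,d_n}$ --- is a statement about the ideal, not about functional vanishing; even if you prove it, it does not help until you already know that a polynomial vanishing on $Q_n$ lies in $I_n$, which is precisely the unproved direction of Theorem~\ref{functional equivalence}.

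The paper avoids this by arguing directly from evaluations. Lemma~\ref{row reduction} shows that the Vandermonde matrix on the nodes $1,3,\dots,2d_n+1$ is row-equivalent over $\Z$ to an upper-triangular matrix with diagonal entries $2^{i}i!$, hence (after extracting odd units) with diagonal $2^{i+t_i}$. The system $M_{d_n}(a_0,\dots,a_{d_n})^T = (p(1),\dots,p(2d_n+1))^T$ then determines, by back-substitution in $\Z_{2^n}$ together with the range constraints $0 \le a_i < 2^{n-i-t_i}$, a unique reduced polynomial. This uses only the values of $p$, never ideal membership, and Theorem~\ref{functional equivalence} (necessity) is deduced \emph{afterward} from Theorem~\ref{t:unique}. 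Your approach can be repaired along the same lines: prove directly that a polynomial of degree at most $d_n$ whose $i$-th coefficient lies in $(-2^{n-i-t_i},\,2^{n-i-t_i})$ and which vanishes at $1,3,\dots,2d_n+1$ modulo $2^n$ must be the zero polynomial. That is exactly the Vandermonde computation, and it replaces your appeal to Theorem~\ref{functional equivalence}.
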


We will prove the Theorem~\ref{functional
equivalence} and Theorem~\ref{t:unique} through a series of lemmas and propositions. Along the way we provide some additional information (for instance Proposition~\ref{d_n bound} establishes a linear upper bound on the degree of a reduced polynomial). While some other approaches are certainly possible, we chose to follow a simple constructive route, since we are interested in algorithmic/complexity issues (see Section~\ref{s:algorithmic_aspects}). 

\begin{proof}[Proof of Theorem~\ref{functional equivalence}, sufficiency]
If $P(x)-T(x)$ is in $I_n$ then, by Proposition~\ref{back},
$P(x)-T(x)$ induces the constant 0 function on $Q_n$, implying that $P(x)$ and $Q(x)$ are functionally equivalent over $Q_n$. 
\end{proof}

\begin{proposition} \label{reduced}
Every polynomial function in $\PF_n$ is induced by a reduced
polynomial in $\RP_n$.

Moreover, for every polynomial $P(x)$ in $\Z[x]$ there exists a polynomial $S_P(x)$ in $I_n$ such that $P(x) - S_P(x)$ is reduced and functionally equivalent to $P(x)$ over $Q_n$. 
\end{proposition}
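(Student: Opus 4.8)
The plan is to prove the ``moreover'' part by an explicit division-and-reduction procedure; the first assertion then follows at once by applying the procedure to any polynomial that induces the given function. The key observation is that the generators $P_{n,0}(x),\dots,P_{n,d_n+1}(x)$ of $I_n$ form a ``staircase'' basis: $P_{n,d_n+1}(x)$ is monic of degree $d_n+1$, while for $i=0,\dots,d_n$ the polynomial $P_{n,i}(x)$ has degree exactly $i$ with leading coefficient exactly $2^{n-i-t_i}$ (the two quantities that appear in the definition of a reduced polynomial; note that $n-i-t_i$ is strictly decreasing in $i$, so it is genuinely positive for all $i \le d_n$).

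Starting from $P(x)\in\Z[x]$, I would first divide $P(x)$ by the monic polynomial $P_{n,d_n+1}(x)$, obtaining $P(x)=A(x)P_{n,d_n+1}(x)+R(x)$ with $A(x),R(x)\in\Z[x]$ and $\deg R\le d_n$. Then I would sweep the coefficients of $R(x)$ from the top down: for $i=d_n,d_n-1,\dots,0$ in this order, if $a_i$ denotes the current coefficient of $x^i$, subtract $c_iP_{n,i}(x)$ where $c_i=\lfloor a_i/2^{n-i-t_i}\rfloor$. Since the leading term of $P_{n,i}(x)$ is $2^{n-i-t_i}x^i$, this replaces $a_i$ by its residue in $\{0,1,\dots,2^{n-i-t_i}-1\}$ and alters only the coefficients of degree $<i$. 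When the sweep finishes, all coefficients $a_0,\dots,a_{d_n}$ lie in the required ranges, so the resulting polynomial $\widehat{P}(x)$ satisfies conditions (i) and (ii) of the definition of reduced, and by construction $\widehat{P}(x)=P(x)-S_P(x)$, where $S_P(x)=A(x)P_{n,d_n+1}(x)+\sum_{i=0}^{d_n}c_iP_{n,i}(x)$ is the sum of all the subtracted multiples; hence $S_P(x)\in I_n$.

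It remains to check that $\widehat{P}(x)$ is functionally equivalent to $P(x)$ over $Q_n$, and that, when $P(x)\in\P_n$, it genuinely lies in $\RP_n$. The first point is immediate from Proposition~\ref{back}: $S_P(x)\in I_n$ induces the constant $0$ on $Q_n$, so $\widehat{P}$ and $P$ induce the same function on $Q_n$. For the second point, since $1\in Q_n$ we get $\widehat{p}(1)=p(1)$ in $\Z_{2^n}$; so if $P(x)\in\P_n$ then $\widehat{p}(1)$ is odd and Proposition~\ref{p_n} gives $\widehat{P}(x)\in\P_n$, which together with (i) and (ii) yields $\widehat{P}(x)\in\RP_n$. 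Finally, given any polynomial function in $\PF_n$, choose any $P(x)\in\P_n$ inducing it and apply the above; the output is a reduced polynomial inducing the same function, proving the first sentence.

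The only delicate point I anticipate is the bookkeeping in the top-down sweep: one must verify that reducing the coefficient of $x^i$ never disturbs an already-fixed coefficient of higher degree. This is precisely why the sweep runs from $i=d_n$ downward and why it is essential that $\deg P_{n,i}=i$; the intermediate coefficients of degree $<i$ may temporarily become negative or large, but this is harmless, since the floor function still produces the correct residue when those coefficients are processed in their turn. (One also sees that each step is a single polynomial subtraction with polynomially bounded data, which is the basis for the complexity claims in Section~\ref{s:algorithmic_aspects}.)
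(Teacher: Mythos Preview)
Your proposal is correct and follows essentially the same approach as the paper: first reduce the degree below $d_n+1$ using $P_{n,d_n+1}$, then sweep the coefficients from top degree down using the $P_{n,i}$ to bring each into range. The only cosmetic difference is that you perform the degree reduction in one polynomial division by the monic $P_{n,d_n+1}$, whereas the paper iterates the subtraction $P(x)\mapsto P(x)-a_d\,x^{d-d_n-1}P_{n,d_n+1}(x)$ one leading term at a time; your version also spells out explicitly (via Proposition~\ref{back} and Proposition~\ref{p_n}) why the output lies in $\RP_n$ when $P\in\P_n$, which the paper leaves implicit.
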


\begin{proof}
Let $p(x)$ be a polynomial function in $\PF_n$ induced by the
polynomial $P(x)$.

If the degree $d$ of $P(x)$ is higher than $d_n$ we may replace
$P(x)$ by $P(x) - a_d x^{d-d_n-1} P_{n,d_n+1}$, where $a_d$ is the
coefficient of $x^d$ in $P(x)$. The polynomial $P(x) - a_d
x^{d-d_n-1} P_{n,d_n+1}$ has degree smaller than $d$ and is
functionally equivalent to $P(x)$. We may continue this until we
obtain a polynomial that is functionally equivalent to $P(x)$ and
has degree no higher than $d_n$.

We assume now that $P(x)$ has degree no higher than $d_n$. If
$P(x)$ is reduced we are done. Otherwise, let $i$ be the highest
degree of a coefficient $a_i$ of $x^i$ that does not satisfy the
requirement $0 \leq a_i \leq 2^{n-i-t_i} -1$. If $q$ is the
quotient obtained by dividing $a_i$ by $2^{n-i-t_i}$ then $P(x)
\approx P(x) - qP_{n,i}$, and the coefficient at degree $i$ in
$P(x) - qP_{n,i}$ is in the correct range $0,\dots,2^{n-i-t_i}-1$.

We repeat this procedure with the next highest degree that has a
coefficient out of range until we reach a reduced polynomial that is
functionally equivalent to $P(x)$.
\end{proof}

\begin{example}\label{rewriting}
Let $n=5$. We have $0+t_0=0$, $1+t_1=1$, $2+t_2=3$, $3+t_3=4$ and
$4+t_4=7$. Therefore $d_5=3$, and every reduced polynomial has the
form
\[ R(x) = a_0 + a_1x + a_2x^2 +a_3x^3, \]
where $0\leq a_0 \leq 31$, $0 \leq a_1 \leq 15$, $0 \leq a_2 \leq 3$
and $0 \leq a_3 \leq 1$. The polynomials $P_{5,i}(x)$, $i=0,1,2,3,4$
are given by
\begin{align*}
 P_{5,0}(x) &= 2^5 = 32, \\
 P_{5,1}(x) &= 2^4(x+1) = 16+16x, \\
 P_{5,2}(x) &= 2^2(x+1)(x+3) = 12+16x+4x^2, \\
 P_{5,3}(x) &= 2(x+1)(x+3)(x+5) = 30+14x+18x^2+2x^3\\
 P_{5,4}(x) &= (x+1)(x+3)(x+5)(x+7) = 9+16x+22x^2+16x^3+x^4.
\end{align*}

Then, for the polynomial $P(x) = 3x^5 +1$, we have
\begin{align*}
 P(x) &= 1+3x^5 \approx (1+3x^5) - 3xP_{5,4}(x) \\
      &\approx 1+5x+16x^2+30x^3+16x^4 \approx (1+5x+16x^2+30x^3+16x^4) - 16P_{5,4}(x) \\
      &\approx 17+5x+16x^2 +30x^3 \approx (17+5x+16x^2 +30x^3) - 15P_{5,3}(x) \\
      &\approx 15+19x+2x^2 \approx (15+19x+2x^2) - P_{5,1}(x) \\
      &\approx 31+3x+2x^2.
\end{align*}

The calculations are done modulo 32 all the time. This is equivalent
to using $P_{5,0}=32$ to make reductions.
\end{example}

\begin{proposition}\label{d_n bound}
Every polynomial function in $\PF_n$ is induced by a polynomial of
degree smaller than $(n+1+\lfloor \log_2 n \rfloor)/2$.
\end{proposition}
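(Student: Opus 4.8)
The plan is to reduce the statement to a purely arithmetic estimate on $d_n$ and then bound $d_n$ using Legendre's formula for $t_i$. By Proposition~\ref{reduced} every polynomial function in $\PF_n$ is induced by a reduced polynomial, and a reduced polynomial has degree at most $d_n$; so it suffices to prove $d_n < (n+1+\lfloor\log_2 n\rfloor)/2$. I would then unwind the definition of $d_n$: it is the largest integer $i$ with $n-i-t_i\ge 1$, i.e.\ with $i+t_i\le n-1$. Using the identity $t_i = i - s_2(i)$, where $s_2(i)$ denotes the number of ones in the binary expansion of $i$, this condition becomes $2i-s_2(i)\le n-1$.

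Writing $d=d_n$ (which satisfies $d\ge 1$ since $n\ge 2$), the defining inequality gives $2d\le n-1+s_2(d)$. Since $d$ has $\lfloor\log_2 d\rfloor+1$ binary digits, $s_2(d)\le\lfloor\log_2 d\rfloor+1$, and therefore $2d\le n+\lfloor\log_2 d\rfloor$. Because $\lfloor\log_2 d\rfloor<d$, this forces $d<n$; consequently $\lfloor\log_2 d\rfloor\le\lfloor\log_2(n-1)\rfloor\le\lfloor\log_2 n\rfloor$, and hence $2d\le n+\lfloor\log_2 n\rfloor$. As all the quantities involved are integers, this is equivalent to $2d<n+1+\lfloor\log_2 n\rfloor$, which is exactly the claimed bound on $d_n$.

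The main (and essentially the only) subtlety is the last step, where $\lfloor\log_2 d_n\rfloor$ has to be replaced by $\lfloor\log_2 n\rfloor$: this is not immediate and relies on first extracting the crude bound $d_n<n$ from $2d_n\le n+\lfloor\log_2 d_n\rfloor$. Everything else is a straightforward rewriting. If one prefers not to invoke Legendre's formula, the same inequality $2d_n\le n+\lfloor\log_2 d_n\rfloor$ follows from the elementary lower bound $t_i> i-2-\lfloor\log_2 i\rfloor$, obtained by estimating each term $\lfloor i/2^k\rfloor$ in the defining sum for $t_i$ from below by $i/2^k-1$ for $k=1,\dots,\lfloor\log_2 i\rfloor$ and discarding the remaining (nonnegative) terms.
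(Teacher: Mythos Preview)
Your proof is correct and follows essentially the same route as the paper: both reduce to showing $d_n < (n+1+\lfloor\log_2 n\rfloor)/2$, both rest on the key inequality $t_i \ge i-1-\lfloor\log_2 i\rfloor$ (you obtain it via Legendre's formula $t_i=i-s_2(i)$ together with $s_2(i)\le\lfloor\log_2 i\rfloor+1$, while the paper derives it directly by estimating the sum $\sum_k\lfloor i/2^k\rfloor$---exactly the alternative you mention at the end), and both then pass from $\lfloor\log_2 d_n\rfloor$ to $\lfloor\log_2 n\rfloor$ using $d_n\le n$. The only cosmetic difference is the direction of the argument: the paper assumes $i\ge(n+1+\lfloor\log_2 n\rfloor)/2$ (and $i\le n$) and shows $i+t_i\ge n$, whereas you start from $d=d_n$ and derive the upper bound directly.
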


\begin{proof}
We need to prove that $d_n < (n+1+\lfloor \log_2 n \rfloor)/2$.

First note that $i-1 - \lfloor \log_2 i \rfloor \leq t_i$. Indeed
$t_i = \lfloor i/2 \rfloor + \lfloor i/4 \rfloor + \dots$~. Only the
first $\lfloor \log_2 i \rfloor$ terms of the series are possibly
positive. Thus $t_i = \sum_{k=1}^{\lfloor \log_2 i \rfloor} \lfloor
i/2^k \rfloor > \sum_{k=1}^{\lfloor \log_2 i \rfloor}  (i/2^k -1) =
i \left(1-\frac{1}{2^{\lfloor \log_2 i \rfloor}}\right) - \lfloor
\log_2 i \rfloor > i \left(1-\frac{1}{2^{\log_2 i -1}}\right) -
\lfloor \log_2 i \rfloor = i-2 - \lfloor \log_2 i \rfloor$.

Assume that $n \geq i \geq \frac{n+1+ \lfloor \log_2 n
\rfloor}{2}$. Then
\[
 i+t_i \geq 2i -1 - \lfloor \log_2 i \rfloor \ge 2\frac{n+1+ \lfloor \log_2 n
 \rfloor}{2} -1 - \lfloor \log_2 n \rfloor = n.
\]
Since $d_n$ is the largest
integer $i$ such that $n-i-t_i$ is positive, we must have $d_n < \frac{n+1+ \lfloor \log_2 n \rfloor}{2}$.
\end{proof}

\begin{lemma}\label{row reduction}
Let $M_m$ be the $(m+1) \times (m+1)$ Vandermonde matrix
\[
 M_m =  \begin{bmatrix}
         1 & 1 & 1 & \dots   & 1 \\
         1 & 3 & 3^2 & \dots & 3^m \\
         \vdots & \vdots & \vdots & \ddots & \vdots\\
         1 & (2m+1) & (2m+1)^2 & \dots & (2m+1)^m
        \end{bmatrix},
\]
in which the rows and columns are indexed by $0,\dots,m$. The matrix
$M_m$ is row equivalent over $\Z$ to a matrix of the form
\[
 R_m =  \begin{bmatrix}
         1 & * &  \dots & * \\
         0 & 2 &  \dots & * \\
         \vdots & \vdots & \ddots & \vdots\\
         0 & 0 & \dots & 2^m m!
        \end{bmatrix},
\]
where the $*$'s represent integers (whose values are irrelevant for
our purposes), and the only type of row reduction used is the one in
which an integer multiple of a row is added to another row.
\end{lemma}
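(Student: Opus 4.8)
The plan is to realize the asserted row equivalence by an explicit sequence of elementary operations which amount to taking iterated finite differences (with step $2$) across the node set $x_j = 2j+1$, $j = 0,\dots,m$. Write $(\Delta f)(x) = f(x+2) - f(x)$ for the forward difference operator with step $2$. A straightforward induction on $i$, using $x_j + 2 = x_{j+1}$ and the Pascal identity $\binom{i}{j-1} + \binom{i}{j} = \binom{i+1}{j}$, gives
\[
 (\Delta^i f)(x_0) = \sum_{j=0}^{i} (-1)^{i-j}\binom{i}{j}\, f(x_j)
\]
for every function $f$. Two elementary facts about $\Delta$ on polynomials will be used: first, $\Delta$ maps $\Z[x]$ into $\Z[x]$, it annihilates constants, and it sends a polynomial of degree $d \ge 1$ to one of degree $d-1$ whose leading coefficient is $2d$ times the original leading coefficient; consequently $\Delta^i$ annihilates every polynomial of degree less than $i$, and $\Delta^i(x^i)$ is the constant $2^i\, i!$. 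In particular, $(\Delta^i[x^l])(x_0)$ is an integer for every $l$, it is $0$ for $l < i$, and it equals $2^i i!$ for $l = i$.

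Next I would carry out the row reduction from the bottom up. The $j$-th row of $M_m$ is the vector $(x_j^0, x_j^1,\dots,x_j^m)$. Processing the rows in the order $i = m, m-1,\dots,1$, I add $(-1)^{i-j}\binom{i}{j}$ times the current $j$-th row to the $i$-th row, for each $j = 0,\dots,i-1$. Two observations make this legitimate: since I work from the bottom up, when the $i$-th row is processed the rows $0,\dots,i-1$ are still the original Vandermonde rows; and since the coefficient of the $i$-th row itself in the combination $\sum_{j=0}^{i} (-1)^{i-j}\binom{i}{j}(\text{row }j)$ is $\binom{i}{i} = 1$, the whole step is a composition of operations of the permitted type, namely adding an integer multiple of one row to another. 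After this step the $i$-th row has become $\bigl((\Delta^i[x^l])(x_0)\bigr)_{l=0}^{m}$, which by the facts above is an integer vector whose first $i$ entries vanish and whose $i$-th entry equals $2^i i!$.

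Performing this for all $i$ from $m$ down to $1$, and leaving row $0$, which is $(1,1,\dots,1)$, untouched, produces a matrix that is upper triangular with integer entries and with diagonal $(2^0 0!,\, 2^1 1!,\, \dots,\, 2^m m!) = (1, 2, \dots, 2^m m!)$; that is, $R_m$. The only point that calls for a little care is recognizing that the finite-difference combinations are genuinely built out of the allowed elementary operations — which works precisely because the self-coefficient is $1$ and, thanks to the bottom-up order, the auxiliary rows are still intact when they are invoked. Once this is in place, everything reduces to the two elementary facts about $\Delta$ recalled in the first paragraph, so I do not anticipate any real obstacle.
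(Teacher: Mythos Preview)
Your argument is correct and is a genuinely different route from the paper's. The paper proves the lemma by an induction on $m$ carrying three claims simultaneously: that each longer Vandermonde row $r_{i,m}$ with $i>m$ is an integer combination of the rows of $M_m$; that $M_m$ reduces to $R_m$; and a determinant computation (comparing $\det M_{m+2}(i)$ with $\det M_{m+1}$ via the Vandermonde formula) to pin down the extra entry $s_i$ when passing from $m$ to $m+1$. Your approach bypasses all of this by recognizing the reduction as iterated forward differences with step $2$: the single identity $(\Delta^i f)(x_0)=\sum_{j=0}^i(-1)^{i-j}\binom{i}{j}f(x_j)$ together with the standard facts $\Delta^i(x^l)=0$ for $l<i$ and $\Delta^i(x^i)=2^i i!$ already delivers the triangular form with the correct diagonal, and the bottom-up processing ensures that only untouched rows are used and that the self-coefficient is $1$. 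Your proof is shorter and more transparent; the paper's proof has the side benefit of showing that every ``extra'' Vandermonde row $r_{i,m}$, $i>m$, lies in the $\Z$-span of the rows of $M_m$, but that fact is not used anywhere else in the paper.
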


\begin{proof}
We will prove, by induction on $m$, that

(i) every vector $r_{i,m}=(1,2i+1,\dots,(2i+1)^m)$, $i \geq m+1$, is
a linear combination of the rows $0,\dots,m$ in $M_m$,

(ii) the matrix $R_m$ can be obtained by row reduction of the
indicated type from $M_m$.

(iii) assuming $r_{i,m} = \alpha_0 r_{0,m} + \dots + \alpha_{m}
r_{m,m}$ in (i),
\[
 r_{i,m+1} - (\alpha_0 r_{0,m+1} + \dots + \alpha_{m}r_{m,m+1})
 = (0,0,\dots,0,s_i),
\]
where $s_{m+1}=2^{m+1}(m+1)!$ and $s_i$ is divisible by
$2^{m+1}(m+1)!$ if $i \geq m+2$.

The claims (i),(ii),(iii) are clear for $m=0$ and assume they are
valid for some $m \geq 0$. We proceed to the inductive step.

(i) Consider the vector $r_{i,m+1} = (1,2i+1,\dots,(2i+1)^{m+1})$,
$i \geq m+2$. From the inductive assumption (iii),
\[
 r_{i,m+1} - (\alpha_0 r_{0,m+1} + \dots + \alpha_{m}r_{m,m+1})
 = (0,0,\dots,0,s_i)
\]
and
\[ r_{m+1,m+1} - (\alpha_0' r_{0,m+1} + \dots + \alpha_{m}'r_{m,m+1})
 = (0,0,\dots,0,2^{m+1}(m+1)!).
\]
Since $2^{m+1}(m+1)!$ divides $s_i$ we see that $r_{i,m+1}$ can be
indeed written as a linear combination of the rows $0,\dots,m+1$
in $M_{m+1}$.

(ii) Since, from inductive assumption (iii),
\[ r_{m+1,m+1} - (\alpha_0' r_{0,m+1} + \dots + \alpha_{m,m}'r_{m,m+1})
 = (0,0,\dots,0,2^{m+1}(m+1)!).
\]
we see that $M_{m+1}$ is row equivalent to a matrix $R_{m+1}'$ in
which the bottom row is $(0,0,\dots,0,2^{m+1}(m+1)!)$ and the
upper left block of size $(m+1) \times (m+1)$ is $M_m$. The
inductive assumption (ii) shows that $R_{m+1}'$ is row equivalent
to $R_{m+1}$.

(iii) Consider the matrix $M_{m+2}(i)$ obtained from $M_{m+1}$ by
extending it by the column vector $(1,3^{m+2},\dots,(2m+3)^{m+2})$
on the right and then by the row vector $r_{i,m+2}$, $i \geq m+2$,
at the bottom. The new matrix is the $(m+3)\times (m+3)$
Vandermonde matrix corresponding to the values $1,3,5,\dots,2m+3$
and $2i+1$. From parts (i) and (ii) of the inductive step that we
just proved, we know that $M_{m+2}(i)$ is row equivalent to a
matrix $R_{m+2}(i)$ in which the bottom row is $(0,0,\dots,s_i)$,
for some integer $s_i$, and the upper left block of size $(m+2)
\times (m+2)$ is $R_{m+1}$. The determinant of the Vandermonde
matrix $M_{m+2}(i)$ is equal to
\begin{alignat*}{2}
 \det(M_{m+2}(i)) &= &&(3-1)\cdot(5-3)(5-1) \dots ((2m+3)-(2m+1))\dots((2m+3)-1)\\
                  &  &&((2i+1)-(2m+3))\dots ((2i+1)-1) \\
                  &= &&\det(M_{m+1}) \cdot ((2i+1)-(2m+3))\dots ((2i+1)-1).
\end{alignat*}
On the other hand, the row equivalence of $M_{m+2}(i)$ and
$R_{m+2}(i)$ shows that
\[
 \det(M_{m+2}(i)) = \det(R_{m+2}(i)) =
 \det(R_{m+1})\cdot s_i = \det(M_{m+1}) \cdot s_i.
\]
Since $\det(M_{m+1}) \neq 0$ we obtain that
$s_i=((2i+1)-(2m+3))\dots ((2i+1)-1)$. In case $i=m+2$,
$s_{m+2}=2\cdot 4 \cdot \dots \cdot (2(m+2)) = 2^{m+2}(m+2)!$. If
$i \geq m+3$, then $s_i$ is a product of $m+2$ consecutive even
numbers and is therefore divisible by $2^{m+2}(m+2)!$. The
inductive claim (iii) now easily follows.
\end{proof}

\begin{proof}[Proof of Theorem~\ref{t:unique}, uniqueness]
Let $p$ be a polynomial function in $\PF_n$. All reduced polynomials
inducing $p$ are given by
\[ P(x) = a_0 + a_1x + \cdots +a_dx^{d}, \]
where $d=d_n$, and the coefficients $a_0,\dots,a_d$ satisfy the
linear system
\[ M_d (a_0,a_1,\dots,a_d)^T = (p(1),p(3),\dots,p(2d+1))^T,  \]
where $(.)^T$ stands for transposition. By Lemma~\ref{row
reduction}, this system is equivalent in $\Z_{2^n}$ to the upper
triangular system
\[ R_d (a_0,a_1,\dots,a_d)^T = (b_0,b_1,\dots,b_d)^T, \]
where $b_i$ are some elements in $\Z_{2^n}$. Since odd numbers are
units in $\Z_{2^n}$ this system is equivalent to a triangular system
\[ R'_d (a_0,a_1,\dots,a_d)^T = (b'_0,b'_1,\dots,b'_d), \]
where
\begin{equation}\label{triangular}
 R'_d =  \begin{bmatrix}
         2^{0+t_0} & * &  \dots & * \\
         0 & 2^{1+t_1} &  \dots & * \\
         \vdots & \vdots & \ddots & \vdots\\
         0 & 0 & \dots & 2^{d+t_d}
        \end{bmatrix}.
\end{equation}

The last equation of this system now reads $2^{d+t_d}a_d = b'_d$.
Since $0 \leq a_d \leq 2^{n-d-t_d}-1$ this equation can only have
one solution in $\Z_{2^n}$. We can substitute this solution in the
second to last equation to obtain an equation
$2^{d-1+t_{d-1}}a_{d-1} = b''_{d-1}$, which will also have a unique
solution in $\Z_{2^n}$ since $0 \leq a_{d-1} \leq
2^{n-d-1-t_{d-1}}-1$.

Continuing with the backward substitution in the triangular system
with matrix $R'_d$ we obtain a unique solution for all the
coefficients $a_d,a_{d-1},\dots,a_0$ of $P(x)$.
\end{proof}

\begin{proposition}
The number of polynomial functions in $\PF_n$ is equal to the number
of reduced polynomials in $\RP_n$.
\end{proposition}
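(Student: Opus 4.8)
The plan is to produce an explicit bijection between the two finite sets and read off the equality of their cardinalities. The natural candidate is the evaluation map
\[
 \Phi \colon \RP_n \to \PF_n, \qquad \Phi(P) = p,
\]
sending a reduced polynomial $P(x)$ to the polynomial function $p \colon Q_n \to Q_n$ that it induces. First I would check that $\Phi$ is well defined: a reduced polynomial lies in $\P_n$ by definition, hence induces a genuine polynomial function on $Q_n$, i.e.\ an element of $\PF_n$.

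Next I would establish surjectivity of $\Phi$, which is precisely the content of Proposition~\ref{reduced}: every polynomial function in $\PF_n$ is induced by some reduced polynomial in $\RP_n$, so every element of $\PF_n$ lies in the image of $\Phi$. Then I would establish injectivity: if $P(x), T(x) \in \RP_n$ satisfy $\Phi(P) = \Phi(T)$, i.e.\ they induce the same polynomial function on $Q_n$, then the uniqueness assertion of Theorem~\ref{t:unique} (each polynomial function in $\PF_n$ is induced by a \emph{unique} reduced polynomial) forces $P(x) = T(x)$.

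Combining the two, $\Phi$ is a bijection between $\RP_n$ and $\PF_n$, whence $|\PF_n| = |\RP_n|$. I do not expect any real obstacle here: the statement is a bookkeeping consequence of the two principal results of this section, with surjectivity furnished by Proposition~\ref{reduced} and injectivity by Theorem~\ref{t:unique}. Indeed, it could equally well be recorded simply as a corollary of Theorem~\ref{t:unique}, which already asserts a unique reduced representative for each function in $\PF_n$; the point of stating it separately is presumably to make the enumerative consequence (and the earlier count of $|\RP_n|$) explicit.
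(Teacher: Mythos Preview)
Your proposal is correct and matches the paper's intent: the paper states this proposition without proof, immediately after proving the uniqueness part of Theorem~\ref{t:unique}, treating it as an evident corollary. Your argument spells out precisely the bijection the paper leaves implicit---surjectivity from Proposition~\ref{reduced} and injectivity from Theorem~\ref{t:unique}---so there is nothing to add.
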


\begin{example}\label{135}
Let $n=4$. In this case $d=d_4=2$. Let $p$ be a polynomial function
in $\PF_4$ for which $p(1)=9$, $p(3)=5$ and $p(5)=9$. We are trying
to determine the unique reduced polynomial $P(x)=a_0+a_1x+a_2x^2$ in
$\RP_4$ that induces $p$. Note that the coefficients must satisfy
the range conditions $0 \leq a_0 \leq 15$, $0 \leq a_1 \leq 7$, and
$0 \leq a_2 \leq 1$. The known values of $p$ give the system
\[
 \begin{bmatrix}
 1 & 1 & 1 &| & 9 \\
 1 & 3 & 9 &| & 5 \\
 1 & 5 & 9 &| & 9
 \end{bmatrix},
\]
which is row equivalent to
\[
 \begin{bmatrix}
 1 & 1 & 1 &| & 9 \\
 0 & 2 & 8 &| & 12 \\
 0 & 0 & 8 &| & 8
 \end{bmatrix}.
\]
The last equation $8a_2=8$, together with the condition $0 \leq a_2
\leq 1$, gives $a_2=1$. The second equation $2a_1+8a_2=12$, together
with the conditions $a_2=1$ and $0 \leq a_1 \leq 7$, gives $a_1=2$.
Finally, the first equation $a_0+a_1+a_2=9$, together with the
conditions $a_2=1$, $a_1=2$ and $0 \leq a_0 \leq 15$, gives $a_0=6$.
Thus the unique reduced polynomial inducing $p$ is $P(x)= 6 + 2x +
x^2$.
\end{example}

\begin{example}
It is clear that one can uniquely determine the reduced polynomial
$R(x)$ that is functionally equivalent to $P(x)$ from the value of
$p$ at any $d_n+1$ consecutive values of $x$.

On the other hand, not any $d_n+1$ values are sufficient. Indeed,
let $n=4$ and $p$ be a polynomial function in $\PF_4$ for which
$p(1)=9$, $p(5)=9$ and $p(9)=9$. We are trying to determine a
reduced polynomial $R(x)=a_0+a_1x+a_2x^2$ in $\RP_4$ that induces
$p$. The known values of $p$ give the system
\[
 \begin{bmatrix}
 1 & 1 & 1 &| & 9 \\
 1 & 5 & 9 &| & 9 \\
 1 & 9 & 1 &| & 9
 \end{bmatrix},
\]
which, together with the range conditions $0 \leq a_0 \leq 15$, $0
\leq a_1 \leq 7$, and $0 \leq a_2 \leq 1$, gives the following 4
solutions: $R(x)=9$, $R(x)=6+2x+x^2$, $R(x) = 5+4x$, $R(x)= 2 + 6x +
x^2$. Note than one of these is the solution obtained in
Example~\ref{135}.
\end{example}

\begin{proof}[Proof of Theorem~\ref{functional equivalence}, necessity]
Let $P(x)$ and $T(x)$ be two functionally equivalent polynomials.
By Proposition~\ref{reduced}, there exists
polynomials $S_P(x)$ and $S_T(x)$ in $I_n$ such that $P(x) - S_P(x)$
and $T(x) - S_T(x)$  are reduced polynomials which are functionally
equivalent to $P(x)$ and $T(x)$. Theorem~\ref{t:unique}
then shows that $P(x)-S_P(x) = T(x) - S_T(x)$, implying that $P(x) -
T(x) = S_P(x)- S_T(x) \in I_n$.
\end{proof}

\begin{proposition}
The set of polynomials in $\Z_{2^n}[x]$ that induce the 0 constant
function on $Q_n$ is precisely the ideal $I_n$.
\end{proposition}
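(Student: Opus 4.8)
The statement concerns $\Z_{2^n}[x]$, whereas $I_n$ was defined as an ideal of $\Z[x]$, so the first thing I would do is fix the reading: here $I_n$ means the ideal of $\Z_{2^n}[x]$ generated by the reductions modulo $2^n$ of $P_{n,0}(x),\dots,P_{n,d_n+1}(x)$; equivalently, $I_n$ (in $\Z_{2^n}[x]$) is the image $\pi(I_n)$ of the $\Z[x]$-ideal $I_n$ under the coefficient-reduction homomorphism $\pi\colon\Z[x]\to\Z_{2^n}[x]$. With this understood, both inclusions reduce immediately to facts already proved over $\Z[x]$, so there is essentially no new computation; the only work is bookkeeping with the two polynomial rings and the evaluation maps on $Q_n$.

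\textbf{The inclusion $I_n\subseteq\{\text{polynomials inducing the }0\text{ function}\}$.} Let $\bar P\in\Z_{2^n}[x]$ lie in the ideal, and write $\bar P=\sum_i\bar S_i\bar P_{n,i}$ with $\bar S_i\in\Z_{2^n}[x]$. Choose arbitrary lifts $S_i\in\Z[x]$, so that $P:=\sum_i S_iP_{n,i}$ lies in the $\Z[x]$-ideal $I_n$ and $\pi(P)=\bar P$. By Proposition~\ref{back}, $P$ induces the constant $0$ function on $Q_n$; since evaluation at a unit commutes with $\pi$, the same holds for $\bar P$. This direction is routine.

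\textbf{The reverse inclusion.} Suppose $\bar P\in\Z_{2^n}[x]$ induces the $0$ function on $Q_n$, and pick any lift $P\in\Z[x]$; then $P$ also induces the $0$ function on $Q_n$. The constant $0$ function is not a polynomial function on $Q_n$ in the sense of the paper (since $0\notin Q_n$), so I would shift by $1$ to reuse the machinery: $P(x)+1$ induces the constant function $1$ on $Q_n$ (note $P(1)\equiv 0\pmod{2^n}$ forces the coefficient sum of $P$ to be even, so the coefficient sum of $P+1$ is odd, and Proposition~\ref{p_n} gives $P+1\in\P_n$). Thus $P+1$ is functionally equivalent over $Q_n$ to the reduced constant polynomial $1\in\P_n$, and Theorem~\ref{functional equivalence} (the necessity half of which is available at this point) yields $(P+1)-1=P\in I_n\subseteq\Z[x]$. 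Applying $\pi$ gives $\bar P=\pi(P)\in I_n$ in $\Z_{2^n}[x]$, as desired.

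\textbf{Main obstacle.} There is no hard step: everything rests on Theorem~\ref{functional equivalence} and Proposition~\ref{back}, both already in hand. The only points needing care are (a) keeping the distinction between $\Z[x]$ and $\Z_{2^n}[x]$ precise when lifting ideal elements and when arguing that $\pi$ intertwines the two evaluation maps on $Q_n$, and (b) circumventing the fact that the zero function is not in the class $\PF_n$ — which is exactly what the shift by $1$ accomplishes. If one prefers to avoid the shift, an alternative is to run the reduction procedure of Proposition~\ref{reduced} on $P$ to obtain $R$ of degree $\le d_n$ with coefficients in the prescribed ranges and $P-R\in I_n$, and then invoke the injectivity of $R\mapsto(r(1),r(3),\dots,r(2d_n+1))$ proved in the proof of Theorem~\ref{t:unique} to conclude $R=0$ and hence $P\in I_n$; the shift-by-$1$ route is shorter.
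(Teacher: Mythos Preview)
Your proposal is correct. Your alternative route at the end---reduce $P$ via Proposition~\ref{reduced} to some $R$ with $P-R\in I_n$, then use the uniqueness argument from Theorem~\ref{t:unique} to force $R=0$---is exactly the paper's proof. Your primary route is a slight repackaging: instead of applying Proposition~\ref{reduced} and Theorem~\ref{t:unique} directly to $P$, you shift to $P+1\in\P_n$ and invoke Theorem~\ref{functional equivalence} against the constant polynomial $1$. This buys you a clean way around the definitional wrinkle that ``reduced'' is only defined for polynomials in $\P_n$, so that strictly speaking the zero polynomial is not reduced; the paper glosses over this (the range conditions and the backward-substitution uniqueness argument in Theorem~\ref{t:unique} do not actually use the parity condition, so the argument still works, but your shift makes everything literally fit the stated hypotheses). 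You are also more careful than the paper in distinguishing the $\Z[x]$ and $\Z_{2^n}[x]$ settings; the paper's statement says $\Z_{2^n}[x]$ but its proof silently works in $\Z[x]$.
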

\begin{proof}
We already know from Proposition~\ref{back} that the polynomials in
$I_n$ induce the constant 0 function on $Q_n$. Conversely, let $P(x)$
induce the constant 0 function on $Q_n$. By Proposition~\ref{reduced} there exists a polynomial $S_P(x)$ in $I_n$ such that $P(x)-S_P(x)$ is reduced and functionally equivalent to $S(x)$. Since the zero polynomial is reduced,  we must have $P(x) - S_P(x) = 0$, by the uniqueness property in Theorem~\ref{t:unique}. Therefore $P(x) = S_P(x) \in I_n$. 
\end{proof}

\section{Permutational polynomial functions on $Q_n$}\label{s:perm}

Some polynomial function on $Q_n$ are permutations on $Q_n$. Denote
the set of such (permutational) polynomial functions by $\PPF_n$ and
the set of polynomials over $\Z$ inducing such functions by $\PP_n$.

\begin{proposition}\label{pp_n}
Let $P(x) = a_0+a_1x+\dots+a_dx^d$ be a polynomial in $\P_n$. Then
$P(x)$ is in $\PP_n$ (i.e.~$P(x)$ induces a permutational polynomial
function on $Q_n$) if and only if the sum of the odd indexed
coefficients $a_1+a_3+a_5+\cdots$ is an odd number.
\end{proposition}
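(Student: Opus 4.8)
The plan is to reduce the permutation question to a statement about the derivative-like behaviour of $P$ on the two cosets of $F_n$ in $Q_n$. Recall from Proposition~\ref{the group} that $Q_n = \langle -1\rangle \times \langle 5\rangle$ and that $F_n$, the subgroup of elements $\equiv 1 \pmod 4$, is cyclic of order $2^{n-2}$ generated by $5$. A polynomial function $p$ on $Q_n$ is a permutation iff it is injective, and I would first dispose of the case $n\le 2$ by hand (for $n=2$, $Q_2=\{1,3\}$ and the condition says $a_1+a_3+\dots$ is odd, which is exactly what makes $p(1)\ne p(3)$ in $\Z_4$). So assume $n\ge 3$.

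The key first step is to split according to the value of $p$ modulo $4$. Write $x = (-1)^\epsilon u$ with $u\in F_n$, $\epsilon\in\{0,1\}$. One checks that $p$ maps $F_n$ into one coset and the other coset of $F_n$ into the complementary coset, or it collapses things; more precisely, since every element of $Q_n$ is odd, $p(x)\bmod 4$ depends only on $x\bmod 4$, and $p$ permutes $Q_n$ iff it induces a bijection $F_n\to F_n$ on one coset and a bijection $(-1)F_n\to (-1)F_n$ (after possibly swapping the two target cosets — but the parity condition on $\sum a_{2i+1}$ is symmetric under $x\mapsto -x$, so it suffices to handle one case). Concretely, I expect the cleanest route is: $p$ is injective on $Q_n$ iff it is injective on $F_n$ and $p(x)\not\equiv p(-x)$ for suitable $x$, and then to analyze injectivity on the cyclic group $F_n$ via the substitution $x = 5^k$, reducing to counting how $P(5^k)$ behaves as $k$ ranges over $\Z_{2^{n-2}}$.

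The heart of the argument is the case $n\ge 3$ restricted to $F_n$: writing $x = 1 + 4y$ and expanding $P(1+4y) = P(1) + 4P'(1)y + (\text{higher order in } 4y)$, the higher-order terms are divisible by $16$ after applying the binomial identities used in Proposition~\ref{the group} (the $2$-adic valuation estimates on $\binom{m}{i}2^{2i}$), so on $F_n$ the function $p$ behaves, modulo a controlled error, like $y\mapsto P(1) + 4P'(1)y$. This is a permutation of $F_n$ iff $P'(1)$ is odd. Since $P'(1) = a_1 + 2a_2 + 3a_3 + \dots = a_1 + a_3 + a_5 + \cdots \pmod 2$, we get exactly the stated condition. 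The honest work is making the ``controlled error'' precise — i.e. showing by induction on $n$ (or by a direct $2$-adic valuation count of $\binom{d}{j}(4y)^j$) that adding the higher-order terms does not destroy injectivity when $P'(1)$ is odd, and conversely that when $P'(1)$ is even the map $y\mapsto P(1+4y)$ already fails to be injective mod $2^n$. An alternative, possibly slicker, route is to invoke Theorem~\ref{t:unique}: $p$ is determined by its values on $1,3,\dots,2d_n+1$, and a polynomial function on a finite set is a permutation iff the associated map on residues is injective; combined with the triangular system from Lemma~\ref{row reduction}, injectivity on $Q_n$ can be read off from the diagonal behaviour, and the first off-diagonal entry (the one coming from the reduction of the second row, contributing the factor $2$) is exactly where $\sum a_{2i+1}$ enters.

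The main obstacle will be the bookkeeping in the ``controlled error'' estimate: one must verify that the contribution of the terms of order $\ge 2$ in $4y$ to $P(1+4y) \bmod 2^n$ is itself a function of $y \bmod 2^{n-3}$ times something, so that it cannot compensate for an odd linear coefficient, and symmetrically that it cannot rescue an even one. I expect this to follow cleanly from the inequality $j + t_j \ge $ (something like) $2j-1-\lfloor\log_2 j\rfloor$ already proved in Proposition~\ref{d_n bound}, applied to bound the valuation of $\binom{d}{j} 4^j = 2^{2j}\binom{d}{j}$, but assembling it into a clean injectivity statement, and handling the interaction between the $F_n$ part and the sign part $\epsilon$, is where the care is needed.
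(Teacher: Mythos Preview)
Your approach is viable in principle but takes a substantially longer route than the paper. The paper's proof is a three-line factorization argument: for any $a,b \in Q_n$,
\[
p(a)-p(b) = (a-b)\sum_{i=1}^d a_i A_i, \qquad A_i = a^{i-1}+a^{i-2}b+\cdots+b^{i-1},
\]
and since $a,b$ are odd, each $A_i$ is a sum of $i$ odd terms and hence has the same parity as $i$. Thus $\sum a_i A_i$ is odd iff $a_1+a_3+a_5+\cdots$ is odd; in that case $p(a)=p(b)$ in $\Z_{2^n}$ forces $a=b$, while in the even case the pair $a=2^{n-1}+1$, $b=1$ gives an explicit collision. No coset splitting, no error estimates, no induction on $n$.

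Your Taylor-expansion route (write $x=1+4y$, isolate $4P'(1)y$, bound the higher-order remainder) would eventually reach the same conclusion, because what you actually need for the ``controlled error'' step is precisely that $P(1+4y_1)-P(1+4y_2)$ equals $4(y_1-y_2)$ times a unit --- and that is nothing but the factorization above specialized to $a=1+4y_1$, $b=1+4y_2$. So the ``honest work'' you flag as the main obstacle dissolves once you factor $p(a)-p(b)$ directly rather than expanding and estimating term by term; the decomposition $Q_n=F_n\cup(-F_n)$ and the interaction with the sign $\epsilon$ then become unnecessary, since the factorization handles all pairs in $Q_n$ uniformly. (Your observation that $P'(1)\equiv a_1+a_3+\cdots\pmod 2$ is of course the same parity computation, arrived at from a different direction.)

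One further caution: your alternative suggestion of invoking Theorem~\ref{t:unique} and Lemma~\ref{row reduction} would be both heavier and out of order --- those results come later in the paper, and the point of keeping Proposition~\ref{pp_n} elementary is that it is reused in Section~\ref{s:rivest} to give a short, self-contained proof of Rivest's theorem.
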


\begin{proof}
Let $a,b \in Q_n$. We have
\begin{align*}
 p(a)-p(b) &= a_1(a-b)+a_2(a^2-b^2) + \dots + a_d(a^d - b^d) = \\
 &= (a-b)(a_1A_1+a_2A_2+\dots+a_dA_d),
\end{align*}
where $A_1=1$ and $A_i=a^{i-1}+a^{i-2}b+\cdots+ab^{i-2}+b^{i-1}$,
for $i \geq 2$. The number $A_i$ is even if and only if $i$ is even.
Consequently, $a_1A_1+a_2A_2+\dots+a_dA_d$ is odd if and only if
$a_1+a_3+a_5+\cdots$ is odd number.

If $a_1+a_3+a_5+\cdots$ is even then
$(a-b)(a_1A_1+a_2A_2+\dots+a_dA_d) \equiv 0 \pmod{2^n}$, for
$a=2^{n-1}+1$, $b=1$. Thus, for this choice of $a$ and $b$, we have 
$p(a)=p(b)$ and, therefore, $p$ is not a permutation on $Q_n$.

If $a_1+a_3+a_5+\cdots$ is odd then
$(a-b)(a_1A_1+a_2A_2+\dots+a_dA_d) \equiv 0 \pmod{2^n}$ if and only
if $a-b \equiv 0 \pmod{2^n}$, i.e., $a=b$ in $Q_n$. Thus $p$ is a
permutation in this case.
\end{proof}

Since we have a bijective correspondence between reduced polynomials
and polynomial functions, it is clear that we also have a bijective
correspondence between the reduced polynomials in $\RP_n$ with odd
sum of odd indexed coefficients and the permutational polynomial
functions in $\PPF_n$.

\begin{proposition}
The number of permutational polynomial functions in $\PPF_n$ is
equal to 
\[  
 |\PPF_n| =2^{(2n-d_n)(d_n+1)/2- 2 - \sum_{i=0}^{d_n}t_i}
\]
\end{proposition}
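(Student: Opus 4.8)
The plan is to mimic the counting argument already used twice in the paper: once to compute $|\RP_n|$ and once (implicitly) to relate permutational functions to reduced polynomials with odd sum of odd-indexed coefficients. Recall from the discussion immediately preceding the statement that $\PPF_n$ is in bijection with the set of reduced polynomials $P(x) = a_0 + a_1 x + \dots + a_{d_n}x^{d_n}$ in $\RP_n$ satisfying \emph{two} parity conditions: the sum of all coefficients $a_0 + a_1 + \dots + a_{d_n}$ is odd (this is the condition from Proposition~\ref{p_n} that places $P$ in $\P_n$), and the sum of the odd-indexed coefficients $a_1 + a_3 + a_5 + \dots$ is odd (this is the condition from Proposition~\ref{pp_n} that makes the function a permutation). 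So it suffices to count reduced polynomials meeting both constraints.

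First I would recall that, \emph{without} any parity constraint, the number of tuples $(a_0,\dots,a_{d_n})$ with $0 \le a_i \le 2^{n-i-t_i}-1$ is
\[
 2^{\sum_{i=0}^{d_n}(n-i-t_i)} = 2^{n(d_n+1) - d_n(d_n+1)/2 - \sum_{i=0}^{d_n}t_i} = 2^{(2n-d_n)(d_n+1)/2 - \sum_{i=0}^{d_n}t_i},
\]
exactly as in the proof of the formula for $|\RP_n|$. Then I would argue that the two parity conditions are \emph{independent} linear conditions over $\mathbb{F}_2$ on the least significant bits of the $a_i$, each of which is satisfied by exactly half of all tuples, so imposing both cuts the count by a factor of $4$. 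Concretely: the first condition involves the parity of $a_0$ (among others), and by matching $P(x)$ with $P(x)\pm 1$ — exactly the involution used in the $|\RP_n|$ proof — one sees that it halves the count while preserving the range constraints. The second condition involves the parity of $a_1$ (note $a_1$ has range $0 \le a_1 \le 2^{n-1-t_1}-1 = 2^{n-2}-1$, which for $n\ge 2$ has at least one bit, so flipping the parity of $a_1$ by adding or subtracting $1$ stays in range, and this changes the parity of $a_1 + a_3 + \dots$ without touching $a_0$, hence without disturbing the first condition). Thus the two halving operations act on disjoint coordinates ($a_0$ versus $a_1$), so they are genuinely independent and the joint count is $1/4$ of the unconstrained count, giving $2^{(2n-d_n)(d_n+1)/2 - 2 - \sum_{i=0}^{d_n}t_i}$, as claimed.

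The main obstacle — really the only place where care is needed — is making the independence of the two parity conditions rigorous, i.e., checking that there genuinely are two distinct coordinates we can toggle. The first condition's parity can be controlled via $a_0$ (whose range $0 \le a_0 \le 2^{n-t_0}-1 = 2^n - 1$ always has room), and the second via $a_1$. One must verify $d_n \ge 1$ so that the coefficient $a_1$ actually exists as a free coefficient; since we assume $n \ge 2$ and $1 + t_1 = 1 < n$ (for $n\ge 2$), we have $d_n \ge 1$, so $a_1$ is present with range at least $\{0,1\}$. With $a_0$ controlling the first parity and $a_1$ controlling the second (and toggling $a_1$ by $\pm 1$ leaves $a_0$ and all higher $a_i$ fixed), the map $(a_0,a_1,a_2,\dots) \mapsto$ (parity of $\sum a_i$, parity of $\sum_{i \text{ odd}} a_i$) is a surjective $\mathbb{F}_2$-linear map onto $\mathbb{F}_2^2$ whose fibers all have equal size, so the fiber over $(1,1)$ has exactly one quarter of the tuples. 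This completes the count and hence the proposition.
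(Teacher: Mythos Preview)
Your argument is correct and in fact more explicit than the paper, which states this proposition without proof (the sentence immediately preceding it identifies $\PPF_n$ with the reduced polynomials having odd sum of odd-indexed coefficients, and the reader is left to infer that this is exactly half of $\RP_n$, whose size was already computed).

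One slip in your informal discussion deserves correction: toggling $a_1$ by $\pm 1$ \emph{does} change the parity of $\sum_i a_i$, so the phrase ``hence without disturbing the first condition'' is false, and ``disjoint coordinates'' alone does not yield independence of the two parity constraints (both constraints involve $a_1$). Your final sentence, however, gives the correct formal argument and rescues the proof: the map sending the parity vector $(b_0,\dots,b_{d_n})\in\mathbb{F}_2^{d_n+1}$ to $\bigl(\sum_i b_i,\ \sum_{i\ \text{odd}} b_i\bigr)\in\mathbb{F}_2^2$ is $\mathbb{F}_2$-linear and surjective (for instance $b_0=1$, rest $0$ hits $(1,0)$, while $b_0=b_1=1$, rest $0$ hits $(0,1)$), and each $b_i$ is uniformly distributed because each range $\{0,\dots,2^{n-i-t_i}-1\}$ has even cardinality for $i\le d_n$. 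Hence exactly one quarter of all coefficient tuples satisfy both parity constraints, giving the stated count.
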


\begin{example}
Reduced polynomials in $\RP_n$ of degree at most 3 that induce
permutational polynomial functions in $\PPF_n$ have the form
$a_0+a_1x+a_2x^2+a_3x^3$, where $a_1+a_3$ is odd, $a_0+a_2$ is even,
$0 \leq a_0 \leq 2^n-1$, $0 \leq a_1 \leq 2^{n-1}-1$, $0 \leq a_2
\leq 2^{n-3}-1$, and $0 \leq a_3 \leq 2^{n-4}-1$.
\end{example}

\begin{proposition}
The inverse of a permutational polynomial function $p\in \PPF_n$ is
also a polynomial function.
\end{proposition}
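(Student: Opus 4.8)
The plan is to combine closure under composition with finiteness. First I would record that $\PF_n$ is closed under composition of functions: if $P(x), T(x)$ in $\P_n$ induce $p, t$ in $\PF_n$, then $P(T(x))$ is a polynomial in $\Z[x]$ and, for every $x \in Q_n$, it evaluates to $p(t(x))$, which lies in $Q_n$ because $t(x) \in Q_n$ and $p$ maps $Q_n$ into $Q_n$. Hence $P(T(x)) \in \P_n$, and the polynomial function it induces is $p \circ t$. Restricting to permutations and using that a composite of bijections of $Q_n$ is again a bijection of $Q_n$, we get that $\PPF_n$ is closed under composition; it also contains the identity permutation, induced by $P(x) = x$.

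Now fix $p \in \PPF_n$ and look at its compositional iterates $p, p^{\circ 2}, p^{\circ 3}, \dots$, all lying in the finite set $\PPF_n$. By the pigeonhole principle, $p^{\circ k} = p^{\circ \ell}$ for some integers $1 \le k < \ell$. Since $p$ is a bijection of the finite set $Q_n$ it has a functional inverse, so cancelling $k$ copies of $p$ yields $p^{\circ (\ell - k)} = \mathrm{id}$ on $Q_n$. If $\ell - k = 1$ then $p = \mathrm{id}$ and the claim is trivial; otherwise $p \circ p^{\circ(\ell - k - 1)} = \mathrm{id}$, so $p^{-1} = p^{\circ(\ell - k - 1)}$, which belongs to $\PPF_n$. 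In either case $p^{-1}$ is a polynomial function — indeed a permutational one — as asserted. (In passing this shows that $\PPF_n$ is a group under composition, namely the group of units of the finite monoid $(\PF_n, \circ)$.)

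I do not anticipate a genuine obstacle here: the heart of the argument is the standard fact that a cancellable element of a finite monoid is invertible, and the only point needing care is verifying that composing two polynomials keeps us inside $\P_n$, which is immediate from $p(Q_n) \subseteq Q_n$. The one thing this argument deliberately does not deliver is an \emph{explicit} (let alone reduced) polynomial inducing $p^{-1}$: the iterate $p^{\circ(\ell - k - 1)}$ is hopeless to write down directly. Producing such a polynomial efficiently — e.g.\ by Hensel-lifting the equation $P(x) = y$ and then reducing modulo $I_n$ — is the business of the later algorithmic section, so it is natural to phrase the present statement as a pure existence result.
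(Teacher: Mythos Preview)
Your proof is correct and follows essentially the same approach as the paper: both arguments exploit that a permutation of the finite set $Q_n$ has finite compositional order $r$, so $p^{-1}=p^{\circ(r-1)}$ is a composite of polynomial functions and hence polynomial. The paper simply invokes the order of $p$ in the symmetric group directly rather than deriving it via pigeonhole and cancellation, but the substance is identical.
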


\begin{proof}
If $p\in \PF_n$ is a permutation on $Q_n$, then $p\in \sigma(Q_n)$,
where $\sigma(Q_n)$ denotes the full permutation group of $Q_n$. Let
$r$ be the order of $p$ in $\sigma(Q_n)$. Then $p^{-1}=p^{r-1}$ and
therefore, if $p$ is induced by the polynomial $P(x)$, then $p^{-1}$
is induced by the polynomial $\underbrace{P(P(\dots P(}_{r-1}x)))$.
\end{proof}

\begin{example}
A linear permutational polynomial function $p$ has a linear
permutational polynomial function as its inverse. Indeed, if $p$ is
induced by $b+ax$, then $a$ must be odd, $a^{-1}$ exists in $\Z_{2^n}$ and
$p^{-1}$ is induced by the polynomial $-a^{-1}b + a^{-1}x$.
\end{example}

We can use the permutational polynomial functions on $Q_n$ to define
permutations on $\Z_{2^n}$ (this will be useful in our last section). Denote by $Q'_n$ the set $\Z_{2^n}
\setminus Q_n$ (consisting of 0 and all zero divisors in
$\Z_{2^n}$). We can easily conjugate the action of a polynomial
function on $Q_n$ to an action on $Q'_n$. Namely, given a polynomial
function $h:Q_n \to Q_n$, define $h':Q'_n \to Q'_n$ by $h'(x) =
h(x+1)-1$.

Given a permutation $p \in \PF_n$, we can define a permutation
$\hat{p}$ on $\Z_{2^n}$ by
 \begin{equation}\label{p kapa}
  \hat{p}(x) =
    \begin{cases}
      p(x), & x \in Q_n \\
      p'(x), & x \in Q'_n
    \end{cases}.
\end{equation}
More generally, given permutations $p,h \in \PF_n$, a permutation
$f_{p,h}$ on $\Z_{2^n}$ can be defined by

\begin{equation}\label{p h kapa}
 f_{p,h} =
    \begin{cases}
      p(x), & x \in Q_n \\
      h'(x), & x \in Q'_n
    \end{cases}.
\end{equation}

\section{On a result of Rivest}\label{s:rivest}

The main result of Rivest in~\cite{rivest} provides a criterion for a polynomial over
$\Z$ to induce a permutation on $\Z_{2^n}$. We infer now this result from our results. Note that our proof only relies on Proposition~\ref{p_n} and Proposition~\ref{pp_n}, both of which have short and rather elementary proofs. 

\begin{theorem}[Rivest~\cite{rivest}]\label{t:rivest}
A polynomial $P(x)=a_0+a_1x+ \dots +a_dx^d$ of degree $d \geq 1$
over $\Z$ induces a permutation on $\Z_{2^n}$ if and only if
the following conditions are satisfied:

(a) the sum $a_2+a_4+a_6+\dots$ is even

(b) the sum $a_3+a_5+a_7+\dots$ is even

(c) $a_1$ is odd
\end{theorem}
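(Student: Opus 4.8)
The plan is to reduce the statement about $\Z_{2^n}$ to the two statements we already have about $Q_n$, by splitting $\Z_{2^n}$ into the units $Q_n$ and the non-units $Q'_n = \Z_{2^n}\setminus Q_n$. A polynomial $P(x)$ induces a permutation on $\Z_{2^n}$ if and only if it induces a permutation that respects the parity of the argument; more precisely, since $P(a)\equiv a_0+a_1+\dots+a_d \pmod 2$ for odd $a$ and $P(a)\equiv a_0 \pmod 2$ for even $a$, a permutation of $\Z_{2^n}$ must send odd numbers to odd numbers and even numbers to even numbers (the number of odd and the number of even residues in $\Z_{2^n}$ are equal, so any bijection that is "parity-constant" on one class is parity-constant on the other). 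Hence $P$ permutes $\Z_{2^n}$ iff it restricts to a permutation of $Q_n$ \emph{and} to a permutation of $Q'_n$. The first condition is exactly ``$P\in\PP_n$'', which by Proposition~\ref{p_n} and Proposition~\ref{pp_n} means the total coefficient sum $a_0+a_1+\dots+a_d$ is odd and the odd-indexed sum $a_1+a_3+a_5+\dots$ is odd.

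For the second condition I would substitute $x = y+1$, turning the action on $Q'_n$ (the even residues) into the action on $Q_n$ (the odd residues) of the shifted polynomial $\widetilde P(y) = P(y+1) - 1$, exactly as in the definition of $h'$ in Section~\ref{s:perm}. Then $P$ permutes $Q'_n$ iff $\widetilde P$ permutes $Q_n$, i.e.\ iff $\widetilde P \in \PP_n$. Writing $\widetilde P(y) = \sum_i b_i y^i$, Proposition~\ref{p_n} and Proposition~\ref{pp_n} say this holds iff $b_0+b_1+\dots$ is odd and $b_1+b_3+\dots$ is odd. Now $b_0+b_1+\dots = \widetilde P(1) = P(2)-1$, whose parity equals the parity of $a_0 - 1$, so this first condition is automatic once $P\in\P_n$ (i.e.\ once $a_0+\dots+a_d$ is odd, one checks $P(2)$ is odd). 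The interesting condition is the odd-indexed sum of the $b_i$.

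The key computational step is therefore to express $b_1+b_3+b_5+\dots \pmod 2$ in terms of the $a_i$. One clean way: $\sum_{i \text{ odd}} b_i = \tfrac12(\widetilde P(1) - \widetilde P(-1)) = \tfrac12(P(2) - P(0)) = \tfrac12(P(2)-a_0)$, and working modulo $4$, $P(2) = \sum_i a_i 2^i \equiv a_0 + 2a_1 \pmod 4$, so $\sum_{i\text{ odd}} b_i \equiv a_1 \pmod 2$. Combining: $P$ permutes $\Z_{2^n}$ iff (from $\PP_n$ on $Q_n$) $a_1+a_3+a_5+\dots$ is odd and $a_0+a_1+\dots+a_d$ is odd, and (from $\PP_n$ on $Q'_n$) $a_1$ is odd. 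These three conditions are equivalent to: $a_1$ odd, $a_3+a_5+\dots$ even (subtract), and $a_2+a_4+\dots$ even (from $a_0+a_1+\dots$ odd together with the previous two, noting $a_0$ drops out modulo $2$ against itself — more carefully, $a_0 + (a_1+a_3+\dots) + (a_2+a_4+\dots)$ odd and $a_1+a_3+\dots$ odd force $a_0 + (a_2+a_4+\dots)$ even; to separate $a_0$ from $a_2+a_4+\dots$ one uses that $P(0)=a_0$ and $P$ a permutation of the evens forces nothing on $a_0$ alone, so I should instead derive (a) directly).

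I expect the main obstacle to be exactly this last bookkeeping: the conditions (a), (b), (c) in the theorem do \emph{not} involve $a_0$ at all, whereas the raw conditions coming from $\PP_n$ and the shift do involve $a_0$, so I must verify that $a_0$ genuinely cancels out and that nothing is lost. The cleanest route is to not try to eliminate $a_0$ by algebra on the three sums, but rather to get (a) as its own consequence of ``$\widetilde P$ permutes $Q_n$'' by a second, independent parity count — e.g.\ comparing $\widetilde P$ evaluated at two even arguments — or equivalently to observe that $P$ permutes the \emph{even} residues of $\Z_{2^n}$ iff $P$ permutes all of $\Z_{2^{n-1}}$ after the substitution $x\mapsto 2x$, giving an inductive handle. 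I would check the small cases $n=1,2$ directly and then present the $n\ge 3$ argument, making sure each of (a), (b), (c) is matched to a specific one of the two $\PP$-membership conditions rather than obtained by an ambiguous linear combination.
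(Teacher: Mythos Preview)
Your approach is the same as the paper's --- split $\Z_{2^n}$ into $Q_n$ and $Q_n'$, apply Propositions~\ref{p_n} and~\ref{pp_n} on each piece, and compute the odd-index coefficient sum of the shifted polynomial via $\tfrac12(P(2)-P(0))$ --- and your computation that this sum is $\equiv a_1 \pmod 2$ is correct and identical to the paper's.

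The gap is your claim that ``a permutation of $\Z_{2^n}$ must send odd numbers to odd numbers and even numbers to even numbers.'' This is false: $P(x)=x+1$ permutes $\Z_{2^n}$ and swaps the parity classes. What your own parity analysis actually shows is that all odd inputs land in one parity class and all even inputs in the other, so a polynomial permutation either preserves both classes or swaps them. You have only treated the first case, and that is exactly why $a_0$ refuses to drop out: in the parity-preserving case you correctly derive (b), (c), and $a_0+a_2+a_4+\cdots$ even, which is (a) \emph{together with} the extra condition $a_0$ even. Your attempts at the end to make $a_0$ disappear by further algebra, by a second parity count, or by an induction via $x\mapsto 2x$ are chasing something that is simply not true under your standing hypothesis.

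The fix is a one-line reduction, and it is what the paper does. First assume $a_0$ is even (parity-preserving case) and run your argument; you get exactly (a), (b), (c) plus ``$a_0$ even.'' Then observe that $P(x)$ permutes $\Z_{2^n}$ if and only if $P(x)+1$ does, and adding $1$ toggles only $a_0$ while leaving (a), (b), (c) unchanged. Hence the $a_0$-odd case reduces to the $a_0$-even case, and the condition on $a_0$ can simply be dropped. (A small related slip: you assert that $\widetilde P\in\P_n$ is ``automatic once $P\in\P_n$''; in fact $\widetilde P(1)=P(2)-1$ is odd iff $a_0$ is even, so this is precisely the parity-preservation hypothesis again, not a free consequence.)
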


\begin{proof}
If $P(x)$ is a polynomial that permutes $\Z_{2^n}$ then all elements in $Q_n'=\Z_{2^n} \setminus Q_n$ are mapped to elements of $Q_n'$ or all of them are mapped to elements in $Q_n$ depending on the parity of
$a_0$. Let us first characterize those polynomials over $\Z$
that permute both $Q_n$ and $Q_n'$. They are precisely the
polynomials for which

(i) $a_0$ is even

(ii) the sum of all coefficients $a_0+a_1+\dots+a_d$ is odd

(iii) the sum of the odd index coefficients $a_1+a_3+\dots$ is odd

(iv) the sum of the odd index coefficients in $P(x+1)-1$ is odd.

The first condition ensures that $Q_n'$ is invariant, the second
that $Q_n$ is invariant (Proposition~\ref{p_n}), the third that
$P(x)$ induces a permutation on $Q_n$ (Proposition~\ref{pp_n}) and
the last that $P(x)$ induces a permutation on $Q_n'$ (by conjugating
the action from $Q_n'$ to $Q_n$ we can again use
Proposition~\ref{pp_n}). Let $S(x)=P(x+1)-1$. The sum of odd index
coefficients of $S(x)$ is odd exactly when $(S(1)-S(-1))/2$ is
odd. But $(S(1)-S(-1))/2=(P(2)-P(0))/2 =
a_1+2a_2+2^2a_3+\dots+2^{d-1}a_d$, and therefore this condition is
equivalent to $a_1$ being odd. Therefore the conditions (i)-(iv) are
equivalent to

(i') $a_0$ is even

(ii') the sum $a_2+a_4+a_6+\dots$ is even

(iii') the sum $a_3+a_5+a_7+\dots$ is even

(iv') $a_1$ is odd.

Thus, in order to characterize all polynomials that induce a
permutation on $\Z_{2^n}$ we just need to drop the condition that
$a_0$ is even (which allows $Q_n$ and $Q_n'$ to be mapped to each
other, when $a_0$ is odd).
\end{proof}

In fact, we may establish a precise connection between the (permutational) polynomial functions on $Q_n$ and those on $\Z_{2^n}$. 

\begin{proposition}
Let $n \geq 2$. For every pair of polynomials functions $p,h \in \PF_n$, there exists a polynomial function $g$ on $\Z_{2^n}$, such that 
\[ g(x) = f_{p,h}(x), \]
for $x$ in $\Z_{2^n}$. 
\end{proposition}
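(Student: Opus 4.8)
The plan is to realize $f_{p,h}$ as a single polynomial function on $\Z_{2^n}$ by interpolating the two halves and using the structure of the set $Q_n'$ of zero divisors. Write $P(x)$ for a polynomial inducing $p$ on $Q_n$ and $H(x)$ for one inducing $h$; recall that $h'$ on $Q_n'$ is given by $h'(x) = h(x+1)-1$, so $h'$ is induced on $Q_n'$ by $H(x+1)-1$. The goal is to produce $G(x)\in\Z[x]$ with $G(x)\equiv P(x)\pmod{2^n}$ for all odd $x$ and $G(x)\equiv H(x+1)-1\pmod{2^n}$ for all even $x$.

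First I would exhibit a polynomial in $\Z[x]$ that acts as the ``indicator of the odd residues'' modulo $2^n$. The natural candidate is a suitable power of $x$: since for odd $a$ we have $a^{2^{n-2}}\equiv 1\pmod{2^n}$ (by the Corollary to Proposition~\ref{the group}, valid for $n\geq 3$; the cases $n=2$ are trivial and handled directly), while for even $a$ the quantity $a^{2^{n-2}}$ is divisible by $2^{2^{n-2}}$, hence by $2^n$ once $2^{n-2}\geq n$, i.e. for $n$ large; for the finitely many small $n$ one checks directly or raises to a slightly larger power of $2$. Thus set $E(x) = x^{N}$ for an appropriate $N$ a power of $2$ with $N\geq n$ and $N\geq 2^{n-2}$, so that $E(x)\equiv 1\pmod{2^n}$ for odd $x$ and $E(x)\equiv 0\pmod{2^n}$ for even $x$. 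Then $1-E(x)$ is the indicator of the even residues.

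Next I would simply set
\[
 G(x) = E(x)\,P(x) + \bigl(1-E(x)\bigr)\bigl(H(x+1)-1\bigr).
\]
For odd $x$ this reduces modulo $2^n$ to $P(x)\equiv p(x)$, and for even $x$ it reduces to $H(x+1)-1\equiv h'(x)$, so $g$, the polynomial function induced by $G$, equals $f_{p,h}$ on all of $\Z_{2^n}$. One should also record that $G\in\Z[x]$, which is immediate since $E,P,H$ all have integer coefficients and the construction only uses addition and multiplication.

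The main obstacle is purely bookkeeping: pinning down a single exponent $N$ (a power of $2$) that simultaneously forces $a^N\equiv 1$ for every odd $a$ and $a^N\equiv 0$ for every even $a$ modulo $2^n$, uniformly in $n\geq 2$, and separately verifying the small cases $n=2,3$ where the group-structure proposition either does not apply or is degenerate. Choosing $N=2^{\lceil\log_2 n\rceil}\cdot 2^{n-2}$ works: it is a multiple of $2^{n-2}$, killing the odd part via the Corollary, and it is at least $n$, killing the even part since $2^N\mid a^N$ for even $a$. With $N$ fixed, the rest is the one-line substitution check above, so no genuinely hard step remains.
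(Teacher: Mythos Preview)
Your proposal is correct and follows essentially the same approach as the paper: build an ``indicator of odd residues'' polynomial $E(x)=x^N$ for a suitable $2$-power exponent $N$, then take $G(x)=E(x)P(x)+(1-E(x))(H(x+1)-1)$. The only difference is cosmetic: the paper chooses the minimal exponent case-by-case ($N=2^{n-2}$ for $n\ge 4$, $N=4$ for $n=3$, $N=2$ for $n=2$), whereas you pick a single, larger $N$ that works uniformly; also note that the Corollary does apply for $n=3$, so only $n=2$ truly requires a separate check.
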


\begin{proof}
Consider the polynomial 
\[ V_0(x) = 
 \begin{cases} 
 x^{2^{n-2}}, & n \geq 4 \\
 x^4, & n=3,\\
 x^2, & n=2. 
 \end{cases}
\]
We claim that, for the associated polynomial function $v_0(x)$ on $\Z_{2^n}$,   
\[ 
 v_0(x) = 
 \begin{cases}
  1, & x \in Q_n, \\
  0, & x \in Q_n'.
 \end{cases} 
\] 
The claim can be easily verified directly for $n=2,3$. Assume $n \geq 4$. From Proposition~\ref{the group}, it follows that $v_0(x)=1$, for $x \in Q_n$. On the other hand, $2^{n-2} \geq n$, for $n \geq 4$, which then implies that $v_0(x)=x^{2^{n-2}}=0$, for $x \in Q_n'$. 

Let $V_1(x) = 1 - V_0(x)$. For the associated polynomial function $v_1(x)$ we clearly have 
\[ 
 v_1(x) = 
 \begin{cases}
  0, & x \in Q_n, \\
  1, & x \in Q_n'.
 \end{cases} 
\] 
Therefore, if $P(x)$ and $H(x)$ are polynomial representing the polynomial functions $p(x)$ and $h(x)$ then the polynomial 
\[ G(x) = P(x)V_1(x) + H'(x)V_0(x), \]
where $'H(x)=H(x+1)-1$, induces the function $f_{p,h}$, showing that this function is a polynomial function on $\Z_{2^n}$. 
\end{proof}

\begin{corollary}\label{cor z}
Let $n \geq 2$. The number of permutational polynomial functions on $\Z_{2^n}$ is 
\begin{equation}\label{n of permutations on Z} 
2^{(2n-d_n)(d_n+1)- 3 - 2\sum_{i=0}^{d_n}t_i},
\end{equation}
where $t_i$ is the largest integer $\ell$ such that $2^\ell$ divides $i!$, and $d_n$ is the largest integer $i$ such that $n-i-t_i$ is positive.
\end{corollary}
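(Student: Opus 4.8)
The plan is to count the permutational polynomial functions on $\Z_{2^n}$ by matching them up with pairs $(p,h)$ of permutational polynomial functions on $Q_n$, using the preceding proposition as the bridge. First I would observe that every permutation of $\Z_{2^n}$ induced by a polynomial $P(x)$ is, by the argument in the proof of Theorem~\ref{t:rivest}, one of two types depending on the parity of $a_0 = P(0)$: either it keeps both $Q_n$ and $Q_n'$ invariant (when $a_0$ is even), or it interchanges them (when $a_0$ is odd). In the invariant case the restriction to $Q_n$ is a permutational polynomial function $p \in \PPF_n$, and the conjugated restriction to $Q_n'$ gives (via $x \mapsto h'(x) = h(x+1)-1$) a permutational polynomial function $h \in \PPF_n$; conversely, the proposition just proved shows that \emph{every} pair $(p,h) \in \PPF_n \times \PPF_n$ arises from a genuine polynomial function $f_{p,h}$ on $\Z_{2^n}$, and $f_{p,h}$ is clearly a permutation of $\Z_{2^n}$ since it restricts to permutations on each of the two complementary sets $Q_n$, $Q_n'$.

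Next I would handle the swapping case. A polynomial function that sends $Q_n$ bijectively to $Q_n'$ and $Q_n'$ bijectively to $Q_n$ can be post-composed with the translation $x \mapsto x+1$ (an affine bijection of $\Z_{2^n}$, hence a polynomial permutation) to land back in the invariant case, and translations are in bijection with... — more precisely, I would argue that composing with $x \mapsto x+1$ gives a bijection between the set of ``swapping'' polynomial permutations and the set of ``invariant'' polynomial permutations, since $x+1$ maps $Q_n$ to $Q_n'$ and vice versa (an odd number plus one is even, and conversely), and composition of polynomial functions is a polynomial function. Hence the number of swapping permutations equals the number of invariant ones, and the total number of permutational polynomial functions on $\Z_{2^n}$ is exactly twice $|\PPF_n|^2$, provided the map $(p,h) \mapsto f_{p,h}$ is a bijection onto the invariant permutations — which needs the observation that two distinct pairs yield distinct functions (immediate, since $p$ is the restriction to $Q_n$ and $h$ is recovered from the restriction to $Q_n'$).

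Finally I would assemble the count: using the formula $|\PPF_n| = 2^{(2n-d_n)(d_n+1)/2 - 2 - \sum_{i=0}^{d_n} t_i}$ from the earlier proposition, the total is
\[
 2 \cdot |\PPF_n|^2 = 2 \cdot 2^{(2n-d_n)(d_n+1) - 4 - 2\sum_{i=0}^{d_n} t_i} = 2^{(2n-d_n)(d_n+1) - 3 - 2\sum_{i=0}^{d_n} t_i},
\]
which is precisely \eqref{n of permutations on Z}. The exponent arithmetic is routine once the factor of $2$ and the squaring are in place.

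The main obstacle I anticipate is not the counting but verifying carefully that the correspondence is exactly two-to-one in the sense described: I need to be sure that the ``invariant'' permutational polynomial functions on $\Z_{2^n}$ are in bijection with $\PPF_n \times \PPF_n$ (surjectivity is the content of the preceding proposition; injectivity needs the recovery of both $p$ and $h$ from the function on $\Z_{2^n}$), and that post-composition with $x+1$ really is a bijection between the swapping and invariant classes rather than, say, collapsing some functions together — this is fine because $x \mapsto x+1$ is invertible, but it should be stated. I would also double-check the small cases $n=2,3$ separately, since the structure of $Q_n$ degenerates there, though the proposition already covers them.
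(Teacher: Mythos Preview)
Your proposal is correct and follows essentially the same approach as the paper: establish a bijection between pairs $(p,h)\in\PPF_n\times\PPF_n$ and the permutational polynomial functions on $\Z_{2^n}$ that keep $Q_n$ and $Q_n'$ invariant, then double to account for the swapping case, yielding $2|\PPF_n|^2$. Your write-up is in fact more careful than the paper's, which simply asserts the bijection and the factor of~$2$ without spelling out the injectivity argument or the explicit bijection (via translation by~$1$) between the invariant and swapping classes.
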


\begin{proof}
Note that the correspondence that associates to each pair of permutational polynomial functions $(p,h)$ on $Q_n$ the element $f_{p,h}$ in the set of permutational polynomial functions on $\Z_{2^n}$ that keep both $Q_n$ and $Q_n'$ invariant is a bijection. Thus, the number of such permutational polynomial functions on $\Z_{2^n}$ is $|\PPF_n|^2$. The number of permutational polynomial functions on $\Z_{2^n}$ is twice larger than this number since we need to take into account the polynomial functions that permute $Q_n$ and $Q_n'$. Thus, the total number is
\[
 2 |\PPF_n|^2 =  2^{(2n-d_n)(d_n+1)- 3 - 2\sum_{i=0}^{d_n}t_i}. \qedhere
\]
\end{proof}

It is interesting to compare the last corollary to earlier results counting permutational polynomial functions on $\Z_{2^n}$. For instance, the following formula is proved in~\cite{keller}. For $n \geq 2$, the number of permutational polynomial functions on $\Z_{2^n}$ is equal to 
\begin{equation}\label{keller}
 2^{3+ \sum_{j=3}^n \beta_j}, 
\end{equation}
where $\beta_j$ is the smallest integer $s$ such that $2^j$ divides $s!$. Combining this with our result yields the identity 
\[ 2 \sum_{i=0}^{d_n}t_i + \sum_{j=3}^n \beta_j =  (2n-d_n)(d_n+1)- 6, \]
for $n \geq 2$. We note that the number of permutational permutations given by the our formula~\eqref{n of permutations on Z} in Corollary~\ref{cor z} seems easier to evaluate than by using~\eqref{keller}, since the summation goes to a smaller bound ($d_n$ rather than $n$) and the summands are easier to compute. 

\section{Multiplication operation on reduced polynomials}\label{s:multiplication}

Here we consider the multiplication operation on the set $\RP_n$ of
reduced polynomials.

We recall that $\RP_n$ is the set of representatives of the
congruences classes of $\P_n$ modulo the functional equivalence
relation $\approx$. In that sense, given $P(x),S(x)\in \RP_n$, we
denote by $P(x) \cdot S(x)$ the corresponding reduced polynomial
inducing the same polynomial function as the product $P(x)S(x)$ of
the polynomials $P(x)$ and $S(x)$. The set $\P_n$ forms a monoid
under polynomial multiplication. Indeed, if the sum of the
coefficient of both $P(x)$ and $S(x)$ is odd, then $p(1)$ and $s(1)$
are odd and therefore so is p(1)s(1), implying that the sum of the
coefficients of $P(x)S(x)$ is also odd. 

\begin{theorem} 
The equivalence $\approx$ is a congruence on $\mathcal{P}_n$. The
factor $(\RP_n,\cdot) = \mathcal{P}_n/\approx$ is a finite
2-group.
\end{theorem}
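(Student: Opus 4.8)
The plan is to prove the theorem in two stages: first establish that $\approx$ is a congruence on the monoid $(\P_n,\cdot)$, and then identify the quotient structure on $(\RP_n,\cdot)$. For the congruence claim, I would take $P(x) \approx P'(x)$ and $S(x) \approx S'(x)$ in $\P_n$ and show $P(x)S(x) \approx P'(x)S'(x)$. By Theorem~\ref{functional equivalence}, $P - P' \in I_n$ and $S - S' \in I_n$. Writing $P S - P' S' = (P - P')S + P'(S - S')$ and using that $I_n$ is an ideal of $\Z[x]$ (so it absorbs multiplication by $S$ and by $P'$), we get $PS - P'S' \in I_n$, hence $PS \approx P'S'$ by Theorem~\ref{functional equivalence} again. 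So $\approx$ is compatible with multiplication, and since $\P_n$ is closed under multiplication (as noted just before the statement), the quotient $\P_n/\approx$ is a well-defined commutative monoid, with underlying set in bijection with $\RP_n$ via the unique reduced representative (Theorem~\ref{t:unique}); the product $P(x) \cdot S(x)$ in $\RP_n$ is exactly the reduced representative of $P(x)S(x)$, which is well-defined precisely because $\approx$ is a congruence.

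Next I would verify that $\RP_n$ is actually a group, not merely a monoid. The identity is the reduced polynomial inducing the identity function on $Q_n$, namely the constant $1$ (which is reduced). To show every element is invertible: a polynomial function $p$ in $\PF_n$ is a function $Q_n \to Q_n$, and since $Q_n$ is a finite abelian group under multiplication, pointwise multiplicative inversion $x \mapsto p(x)^{-1}$ is again a function $Q_n \to Q_n$. The key point is that this function is itself a polynomial function on $Q_n$: by the Corollary to Proposition~\ref{the group}, $a^{2^{n-2}} = 1$ for all $a \in Q_n$, so $p(x)^{-1} = p(x)^{2^{n-2}-1}$, which is induced by the polynomial $P(x)^{2^{n-2}-1}$ (this lies in $\P_n$ since $p(1)^{2^{n-2}-1}$ is odd). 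Hence the reduced representative of $P(x)^{2^{n-2}-1}$ is the inverse of $P(x)$ in $(\RP_n,\cdot)$. Since $\RP_n$ is finite (Proposition on $|\RP_n|$) and every element has an inverse, $(\RP_n,\cdot)$ is a finite abelian group.

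Finally, to see it is a $2$-group it suffices to show the order of every element divides a power of $2$. For $P(x) \in \RP_n$ inducing $p \in \PPF_n \cup \PF_n$, consider the order of $p$ as an element of the monoid of polynomial functions on $Q_n$: since $|Q_n|$ is a power of $2$, the group $Q_n^{Q_n}$ of all functions... more carefully, the relevant finite monoid here is $\PF_n$ under composition-free multiplication — actually under pointwise multiplication of functions $Q_n \to Q_n$, which is just the group $(Q_n)^{Q_n} \cong (\Z_2 \times \Z_{2^{n-2}})^{|Q_n|}$, an abelian $2$-group. Since $\RP_n \cong \PF_n$ embeds into this abelian $2$-group as a subgroup (the map sending a reduced polynomial to its induced function is an injective group homomorphism by Theorems~\ref{functional equivalence} and~\ref{t:unique} together with the fact that multiplication of reduced polynomials corresponds to pointwise multiplication of the induced functions), every element has $2$-power order, so $(\RP_n,\cdot)$ is a finite $2$-group.

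The main obstacle I anticipate is making precise and clean the claim that $\RP_n$ is a \emph{subgroup} of the full function group $(Q_n)^{Q_n}$, i.e., that the induced-function map is an injective homomorphism onto a subgroup closed under inversion — the closure under inversion is where one genuinely needs the observation $a^{2^{n-2}}=1$ to realize pointwise inversion by a polynomial. Once that embedding is in place, the $2$-group conclusion is immediate from Lagrange's theorem, since $|(Q_n)^{Q_n}| = (2^{n-1})^{2^{n-1}}$ is a power of $2$. Everything else (the congruence verification, finiteness, commutativity) is routine given the results already established.
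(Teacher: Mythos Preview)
Your proof is correct and shares the paper's essential idea, namely that $a^{2^{n-2}}=1$ for every $a\in Q_n$, but you deploy it less economically. For the congruence, you route through Theorem~\ref{functional equivalence} and the ideal $I_n$; the paper instead uses the \emph{definition} of $\approx$ directly: if $p_1=s_1$ and $p_2=s_2$ as functions on $Q_n$, then $p_1p_2=s_1s_2$ pointwise, so $P_1P_2\approx S_1S_2$. This avoids invoking the harder structural result. For the $2$-group claim, you split the work into (a) producing inverses via $P(x)^{2^{n-2}-1}$ and (b) bounding element orders via an embedding into $Q_n^{Q_n}$ and Lagrange. The paper collapses both into a single line: since $p(a)^{2^{n-2}}=1$ for all $a\in Q_n$, one has $P(x)^{2^{n-2}}\approx 1$, which simultaneously shows that every element has order dividing $2^{n-2}$ (hence a $2$-group) and that $P(x)^{2^{n-2}-1}$ is an inverse. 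Your embedding argument is valid but unnecessary once you have noted $P(x)^{2^{n-2}}\approx 1$.
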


\begin{proof}
Let $P_i(x) \approx S_i(x)$, for $i=1,2$, $T_P(x) = P_1(x)P_2(x)$,
and $T_S(x) = S_1(x)S_2(x)$. Then $t_P(x) =
p_1(x)p_2(x)=s_1(x)s_2(x)= t_S(x)$. Thus $P_1(x)P_2(x) \approx
S_1(x)S_2(x)$ and $\approx$ is a congruence on $\mathcal{P}$.

For every $a \in Q_n$, we have $a^{2^{n-2}}=1$ in $Q_n$.
Therefore, for any polynomial $P(x)$ in $\mathcal{P}_n$, the
polynomial $P(x)^{2^{n-2}}$ is functionally equivalent to 1. Thus
each reduced polynomial has a multiplicative inverse.
\end{proof}

In order to avoid confusion we denote inverses of polynomial
functions under composition by $(.)^{-1}$, and the inverse of a
reduced polynomial $P(x)$ under multiplication by $\frac{1}{P(x)}$.

The subset $\PRP_n$ of $\RP_n$ consisting of reduced polynomials
that induce permutations on $Q_n$ is not closed under
multiplication. Indeed, $P(x)=2+x$ induces a permutation on $Q_n$,
while $P(x)^2 = 4+4x+x^2$ does not.

\begin{proposition}
The set of reduced permutational polynomials $\PRP_n$ is closed
under multiplicative inversion, i.e., $P(x)\in \PRP_n$ implies
$\frac{1}{P(x)} \in \PRP_n$.
\end{proposition}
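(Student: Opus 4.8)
The plan is to translate the statement into the language of the arithmetic already set up. If $P(x)\in\PRP_n$ then the polynomial function $p$ it induces is a permutation of $Q_n$, hence an invertible element of the monoid $(\RP_n,\cdot)$ (by the previous theorem $\RP_n$ is a group, so every $P$ has a multiplicative inverse $\tfrac1{P(x)}$). Let $S(x)=\tfrac1{P(x)}$, so that $p(x)\,s(x)\equiv 1\pmod{2^n}$ for every $x\in Q_n$; equivalently $s(x)=p(x)^{-1}$ in $Q_n$, the pointwise inverse in the group $Q_n$. What has to be shown is that $s$ is again a \emph{permutation} of $Q_n$, i.e. that $S(x)\in\PP_n$, and by Proposition~\ref{pp_n} this is the purely combinatorial condition that the sum of the odd-indexed coefficients of $S(x)$ is odd.

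First I would argue that $s$ is a permutation of $Q_n$ directly, without touching coefficients: the map $x\mapsto x^{-1}$ is an involution (hence a bijection) of $Q_n$ since $Q_n$ is an abelian group, and $s=(\,\cdot\,)^{-1}\circ p$ is a composite of two bijections of $Q_n$, hence a bijection of $Q_n$. So $s\in\PPF_n$, and therefore the \emph{unique} reduced polynomial inducing $s$ — which is exactly $S(x)=\tfrac1{P(x)}$ — lies in $\PRP_n$ by the bijective correspondence between reduced permutational polynomials and $\PPF_n$ noted just after Proposition~\ref{pp_n}. That already closes the argument; the only subtlety is making sure that ``$\tfrac1{P(x)}$ induces $s$'' is literally true, which is immediate from the definition of multiplication on $\RP_n$ (it is the reduced representative of $P(x)S(x)$, and that product induces $p\cdot s\equiv 1$, forcing $s$ to be the inverse function pointwise).

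The one genuine point to watch — and the place I'd expect a careful referee to push — is the identification of the multiplicative inverse of a \emph{polynomial} $P(x)$ in the ring $Q_n$ (computed pointwise, $p(x)^{-1}$) with the group-inverse \emph{function} on $Q_n$; these agree tautologically since for each fixed $x\in Q_n$ the value $p(x)^{-1}$ is the inverse of the unit $p(x)$ in $\Z_{2^n}$, and the inverse in the subgroup $Q_n$ is the same element. So in fact the proof is essentially a one-liner: \emph{inversion in the group $Q_n$ is a bijection, composing a bijection with a bijection gives a bijection, and the reduced representative of a permutational polynomial function is by definition in $\PRP_n$}. If one instead wanted an argument staying at the level of coefficients, one would have to show that Rivest-type parity conditions on the odd-indexed coefficients are preserved under the (nonlinear) passage $P\mapsto\tfrac1P$ — that would be the hard route, and the point of the proposal is precisely that it can be avoided.
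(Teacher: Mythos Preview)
Your proposal is correct and takes essentially the same approach as the paper: the paper's proof is the single sentence ``This directly follows from the fact that different elements in $Q_n$ have different multiplicative inverses,'' which is exactly your observation that inversion on $Q_n$ is a bijection and hence $s=(\,\cdot\,)^{-1}\circ p$ is a permutation. Your write-up is more explicit about why the reduced representative of $s$ is $\tfrac{1}{P(x)}$, but the underlying idea is identical.
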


\begin{proof}
This directly follows from the fact that different elements in $Q_n$
have different multiplicative inverses.
\end{proof}

\begin{example}
We have $\frac{1}{2+x} = 2+x$ in $\RP_3$, $\frac{1}{4+3x}=
3+3x+x^2$ in $\RP_4$, and $\frac{1}{31+2x+2x^2+x^3+x^4} =
4+7x+2x^2$ in $\RP_5$.
\end{example}

We note that finding the inverse polynomial by using the equality
$\frac{1}{P(x)} = P(x)^{2^{n-2}-1}$ is not effective. We provide an
effective method in the next section.

\section{Algorithmic aspects}\label{s:algorithmic_aspects}

We briefly address the complexity issues related to interpolation of
polynomial functions, inversion of permutational polynomial
functions and multiplicative inversion of polynomials.

\begin{theorem}\label{interpolation}
There exists an algorithm of polynomial complexity in $n$ that,
given the values $p(1),p(3),\dots,p(2d_n+1)$ of a polynomial
function $p$ in $\PF_n$, produces the unique reduced polynomial
$R(x)$ that induces $p$.
\end{theorem}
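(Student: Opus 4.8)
The plan is to turn the proof of uniqueness of reduced polynomials (Theorem~\ref{t:unique}) into a constructive algorithm and to verify that every step runs in time polynomial in $n$. Recall that a reduced polynomial has degree at most $d_n$, and by Proposition~\ref{d_n bound} we have $d_n < (n+1+\lfloor \log_2 n\rfloor)/2$, so the number of unknown coefficients $a_0,\dots,a_{d_n}$ is $O(n)$, and each coefficient is an integer with at most $n$ bits. Thus the output has size $O(n^2)$ bits, and we only need to show it can be produced with $\mathrm{poly}(n)$ bit operations.

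First I would form the $(d_n+1)\times(d_n+1)$ Vandermonde matrix $M_{d_n}$ on the nodes $1,3,\dots,2d_n+1$ together with the right-hand side vector $(p(1),p(3),\dots,p(2d_n+1))^T$, all entries reduced modulo $2^n$. Note that the nodes $(2i+1)^j$ modulo $2^n$ are computed by repeated squaring/multiplication in $O(n)$ arithmetic operations on $n$-bit numbers each, so building the augmented matrix costs $\mathrm{poly}(n)$. Next I would carry out exactly the row reduction described in Lemma~\ref{row reduction}: using only additions of integer multiples of one row to another, $M_{d_n}$ is brought over $\Z$ (hence over $\Z_{2^n}$) to the upper triangular form $R_{d_n}$ whose $i$-th diagonal entry is $2^i i! = 2^{i+t_i}\cdot(\text{odd})$. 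This is ordinary Gaussian elimination on an $O(n)\times O(n)$ matrix over $\Z_{2^n}$, so it uses $O(n^3)$ ring operations, i.e.\ $\mathrm{poly}(n)$ bit operations; the multipliers at each stage are the ones implicit in the inductive construction of Lemma~\ref{row reduction}, or one can simply do column-by-column elimination, clearing each column below the pivot whose 2-adic valuation is minimal among the remaining entries (which, by the lemma, is the top surviving one). After dividing the $i$-th row by the odd part of its diagonal entry, we reach the system with matrix $R'_{d_n}$ of \eqref{triangular}, whose diagonal entries are the pure powers $2^{i+t_i}$, again at cost $\mathrm{poly}(n)$ since inverting an odd residue modulo $2^n$ is a single extended-Euclid computation or one Hensel lift.

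Finally I would perform back-substitution exactly as in the proof of Theorem~\ref{t:unique}: the last equation reads $2^{d_n+t_{d_n}}a_{d_n}=b'_{d_n}$ in $\Z_{2^n}$, and since $0\le a_{d_n}\le 2^{n-d_n-t_{d_n}}-1$ this determines $a_{d_n}$ uniquely (the valuation of $b'_{d_n}$ must be at least $d_n+t_{d_n}$, and then $a_{d_n}$ is $b'_{d_n}/2^{d_n+t_{d_n}}$ reduced modulo $2^{n-d_n-t_{d_n}}$). Substituting back and repeating, each of the $O(n)$ coefficients is obtained by one shift, one division by a power of two, and one reduction modulo a power of two — each a $\mathrm{poly}(n)$-bit operation. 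Collecting the bounds, the whole procedure runs in time polynomial in $n$, and Theorem~\ref{t:unique} guarantees the output is the unique reduced polynomial inducing $p$.

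I do not expect a genuine obstacle here, since all the mathematical content is already contained in Lemma~\ref{row reduction} and Theorem~\ref{t:unique}; the only thing requiring a little care is the complexity bookkeeping, in particular checking that the integer entries arising during the row reduction of $M_{d_n}$ do not grow (they stay reduced modulo $2^n$ throughout, so their size is bounded by $n$ bits), and that the explicit multipliers used in elimination are themselves cheap to compute. Making these observations explicit is the main — and only mildly technical — step.
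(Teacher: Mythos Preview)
Your proposal is correct and follows essentially the same approach as the paper: invoke the linear bound on $d_n$ from Proposition~\ref{d_n bound}, then run the row reduction from Lemma~\ref{row reduction} and the back-substitution from the uniqueness proof of Theorem~\ref{t:unique} on an $(d_n+1)\times(d_n+1)$ system over $\Z_{2^n}$. The paper's own proof says exactly this in two sentences; your version simply supplies the detailed complexity bookkeeping that the paper leaves implicit.
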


\begin{proof}
Note that $d_n$ has a linear upper bound in $n$ by
Proposition~\ref{d_n bound}. Running the row reduction on the
$(d_n+1) \times (d_n+1)$ linear system as suggested in the uniqueness part of the proof
of Theorem~\ref{t:unique} takes polynomially many steps
in terms of $n$.
\end{proof}

\begin{theorem}
There exists an algorithm of polynomial complexity in $n+m$ that,
given a polynomial $P(x) \in \P_n$ of degree $m$ (with coefficients reduced modulo $2^n$, i.e., coefficients in the range between 0 and $2^n-1$ inclusive), produces the
unique reduced polynomial $R(x)$ that is functionally equivalent to
$P(x)$.
\end{theorem}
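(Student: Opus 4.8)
The plan is to turn the reduction process described in Proposition~\ref{reduced} into an explicit algorithm and then count its operations. The input is a polynomial $P(x) = a_0 + a_1 x + \dots + a_m x^m$ of degree $m$ with each $a_i$ in $\{0,1,\dots,2^n-1\}$. The first phase is degree reduction: as long as the current polynomial has degree $d > d_n$, subtract $a_d x^{d-d_n-1} P_{n,d_n+1}(x)$ to kill the leading term, which drops the degree by at least one while preserving the induced function on $Q_n$. The second phase is coefficient reduction: working from degree $d_n$ downward, if the coefficient $a_i$ of $x^i$ violates $0 \le a_i \le 2^{n-i-t_i}-1$, replace the current polynomial by $P(x) - q P_{n,i}(x)$ where $q = \lfloor a_i / 2^{n-i-t_i}\rfloor$, which brings the degree-$i$ coefficient into range without disturbing any coefficient of degree $> i$ (since $\deg P_{n,i} = i$). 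Correctness of both phases is exactly Proposition~\ref{reduced} together with Theorem~\ref{functional equivalence}/Theorem~\ref{t:unique} for uniqueness of the output.

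The substance of the proof is the complexity bound, so I would organize it as follows. First, fix a computation model: arithmetic on integers of bit-length $O(n)$ costs polynomially in $n$, and all intermediate coefficients will be shown to stay of bit-length $O(n+\log m)$ or smaller. Second, bound the degree-reduction phase: it runs at most $m - d_n$ iterations (the degree strictly decreases each step), each iteration touching $O(m)$ coefficients and performing one multiplication by $P_{n,d_n+1}(x)$, a fixed polynomial of degree $d_n+1$ with coefficients of bit-length $O(n)$; so this phase is $O(m^2)$ coefficient operations, hence polynomial in $n+m$. Crucially, one should observe that as soon as the degree drops to $d_n$ one may and should reduce all remaining coefficients modulo $2^n$, keeping their size $O(n)$; alternatively note that since the induced function on $\Z_{2^n}$ only depends on coefficients mod $2^n$, one can reduce mod $2^n$ at every step. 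Third, bound the coefficient-reduction phase: it processes the $d_n+1$ coefficients of degrees $d_n, d_n-1, \dots, 0$ in order, and by Proposition~\ref{d_n bound} we have $d_n = O(n)$, so there are $O(n)$ stages, each costing one integer division and $O(d_n) = O(n)$ subtractions of $O(n)$-bit integers — polynomial in $n$. Summing the two phases gives an overall bound polynomial in $n+m$.

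The main obstacle I expect is bookkeeping on coefficient growth rather than any conceptual difficulty: one must argue that the intermediate polynomials do not have coefficients of uncontrolled size. The cleanest way to dispose of this is the remark that the reduced polynomial representing $P(x)$ depends only on the polynomial function $P$ induces on $\Z_{2^n}$ (indeed only on its values on $Q_n$), so at any point in the algorithm we are free to replace every coefficient by its residue modulo $2^n$; doing so after each subtraction keeps all coefficients in $\{0,\dots,2^n-1\}$, and the degree-reduction phase then manipulates a polynomial of degree at most $m$ with $O(n)$-bit coefficients throughout. With that normalization in place, every elementary step is an arithmetic operation on $O(n)$-bit integers, the number of steps is $O(m^2 + n^2)$, and the theorem follows. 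A secondary point worth stating explicitly is why $d_n = O(n)$ (so that the second phase is genuinely short): this is precisely Proposition~\ref{d_n bound}, which gives $d_n < (n+1+\lfloor \log_2 n\rfloor)/2$.
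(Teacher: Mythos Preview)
Your argument is correct. You carry out in detail the reduction algorithm of Proposition~\ref{reduced}, track coefficient sizes by reducing modulo $2^n$ at each step, and use Proposition~\ref{d_n bound} to bound $d_n$; the step count $O(m^2+n^2)$ on $O(n)$-bit integers is sound.

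The paper, however, takes a shorter primary route: it simply evaluates $p(1),p(3),\dots,p(2d_n+1)$ modulo $2^n$ (each evaluation costs $O(m)$ multiplications of $n$-bit integers) and then invokes the interpolation algorithm of Theorem~\ref{interpolation}, which is already known to be polynomial in $n$. This avoids all bookkeeping about intermediate coefficient growth, since one never manipulates the polynomial symbolically at all. The paper does mention your reduction-based approach as an alternative (``Another approach would be to use the reduction algorithm suggested in the proof of Proposition~\ref{reduced}''), but leaves its complexity analysis implicit; you have supplied exactly that analysis. What your route buys is a self-contained algorithm that does not rely on the linear-algebra machinery behind Theorem~\ref{interpolation}; what the paper's route buys is brevity, since the hard work was already packaged into the interpolation theorem.
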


\begin{proof}
By Theorem~\ref{interpolation} it is sufficient to calculate
$p(1),p(3),\dots,p(2d_n+1)$ in polynomially many steps in terms of
$n+m$. This is possible since the degree of $P(x)$ is $m$ and the
calculations are done modulo $2^n$.

Another approach would be to use the reduction algorithm 
suggested in the proof of Proposition~\ref{reduced} and implemented
in Example~\ref{rewriting}.
\end{proof}

\begin{theorem}
There exists an algorithm of polynomial complexity in $n+m$ that,
given a polynomial $P(x)$ in $\PP_n$ of degree $m$ (with coefficients reduced modulo $2^n$), produces the
unique reduced polynomial inducing the inverse polynomial function
$p^{-1}$.\label{inverse perm}
\end{theorem}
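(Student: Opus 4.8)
The plan is to reduce the problem to two tasks we already know how to perform in polynomial time: interpolating a reduced polynomial from its values (Theorem~\ref{interpolation}), and evaluating a given polynomial at the small set of sample points $1,3,\dots,2d_n+1$. First I would run the interpolation algorithm of the previous theorem on $P(x)$ to obtain the unique reduced polynomial $R(x)$ functionally equivalent to $P(x)$; since $d_n$ is linear in $n$ (Proposition~\ref{d_n bound}) and $\deg P = m$, evaluating $P$ at $d_n+1$ points modulo $2^n$ and then row-reducing the associated $(d_n+1)\times(d_n+1)$ Vandermonde system costs only polynomially many arithmetic operations in $n+m$. So without loss of generality we may assume $P(x)$ is already reduced.

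Next I would determine the permutation $p$ as an explicit table. Because $p$ permutes $Q_n$, which has $2^{n-1}$ elements, we cannot afford to list all values; instead I would exploit the structure of $Q_n$ from Proposition~\ref{the group}: every element of $Q_n$ is uniquely $(-1)^i5^j$ with $i\in\{0,1\}$ and $0\le j<2^{n-2}$. The key observation is that $p^{-1}$ is itself a reduced polynomial (the inverse of a permutational polynomial function is a polynomial function, as shown in Section~\ref{s:perm}), so by Theorem~\ref{interpolation} it suffices to compute the $d_n+1$ values $p^{-1}(1),p^{-1}(3),\dots,p^{-1}(2d_n+1)$ in polynomial time. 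Thus the real task is: given the target value $b=2\ell+1\in Q_n$, find the unique $a\in Q_n$ with $p(a)=b$, i.e.\ solve $P(a)=b$ in $\Z_{2^n}$ for a root lying in $Q_n$.

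For this I would invoke Hensel lifting, exactly as set up in Section~\ref{s:group} for the polynomial $P(x)-b$. Since $P$ induces a permutation on $Q_n$ and $b\in Q_n$, the equation $P(x)\equiv b\pmod{2^n}$ has a unique solution in $Q_n$; along the branch of the Hensel search that forces the least significant bit $x_0=1$ (so that $x$ stays odd) there is no branching, because at each step the derivative condition guaranteed by Proposition~\ref{pp_n} — concretely, that $a_1+a_3+\dots$ is odd — makes $P(x)-b$ a polynomial whose unique odd root is lifted bit by bit with exactly one valid choice at each step. Hence $n$ steps, each involving an evaluation of a degree-$m$ polynomial (or the already-reduced degree-$d_n$ polynomial) modulo a power of $2$, recover $a=p^{-1}(b)$ in time polynomial in $n+m$. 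Running this for each of the $d_n+1$ sample points and feeding the resulting values into the interpolation algorithm yields the reduced polynomial inducing $p^{-1}$.

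The main obstacle is justifying that the Hensel lift for $P(x)-b$ proceeds without branching when restricted to $Q_n$; one must check that the permutation hypothesis is exactly what rules out the ``both bits work'' and ``no bit works'' cases at every step, so that the cost stays polynomial rather than potentially exponential. Concretely, if at some stage two odd lifts $x$ and $x'$ agreed mod $2^k$ but both satisfied $P\equiv b\pmod{2^{k+1}}$, iterating would produce two elements of $Q_n$ with the same image under $p$, contradicting injectivity; and the existence of at least one lift follows from surjectivity of $p$ on $Q_n$ together with the fact that the bits of the unique genuine solution are consistent with whatever bits have been fixed so far. Once this is nailed down, the rest is the routine bookkeeping of composing the polynomial-time subroutines already established.
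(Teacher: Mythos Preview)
Your approach is correct but differs from the paper's. You compute $p^{-1}(1),\dots,p^{-1}(2d_n+1)$ by Hensel lifting each equation $P(x)\equiv b\pmod{2^n}$ along the odd branch, and then feed these values into the interpolation routine of Theorem~\ref{interpolation}. The paper instead bypasses Hensel lifting entirely: it notes that $p^{-1}(p(2j+1))=2j+1$ already supplies $d_n+1$ values of $p^{-1}$, namely at the (non-consecutive) odd points $p(1),p(3),\dots,p(2d_n+1)$, and sets up the Vandermonde system directly at those points; the factorisation $P(a)-P(b)=(a-b)\cdot(\text{odd})$ from the proof of Proposition~\ref{pp_n} is exactly what forces row reduction to reach the same triangular form~(\ref{triangular}) as for consecutive odd arguments, whence back-substitution finishes. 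So the paper stays purely within the linear-algebra framework of Lemma~\ref{row reduction}, while your route is more modular, reusing the root-finding machinery of Section~\ref{s:group} and reducing cleanly to Theorem~\ref{interpolation}. One small correction: the contradiction argument in your last paragraph (two lifts at level $k+1$ would ``iterate'' to two elements of $Q_n$ with the same image under $p$) does not work as written, since continuing both branches to level $n$ need not preserve $P\equiv b\pmod{2^n}$. The clean reason for non-branching is the one you already stated: the permutation hypothesis gives $P'(x)\equiv a_1+a_3+a_5+\cdots\equiv 1\pmod 2$ for odd $x$, so classical Hensel guarantees exactly one lift at every step; equivalently, since the criterion of Proposition~\ref{pp_n} is independent of $n$, $p$ also permutes $Q_{k+1}$, ruling out two odd preimages of $b\bmod 2^{k+1}$.
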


\begin{proof}
First calculate $p(1),p(3),\dots,p(2d_n+1)$. Set up a system of
linear equations to determine the coefficients of the reduced
polynomial $R(x) = a_0 + a_1x + \dots + a_dx^d$ that is functionally
equivalent to $p^{-1}$, where $d=d_n$. The system has the form
\[
  \begin{bmatrix}
     1 & p(1) & p(1)^2 & \dots   & p(1)^d \\
     1 & p(3) & p(3)^2 & \dots   & p(3)^d \\
     \vdots & \vdots & \vdots & \ddots & \vdots\\
     1 & p(2d+1) & p(2d+1)^2 & \dots   & p(2d+1)^d \\
  \end{bmatrix}
  \begin{bmatrix} a_0 \\ a_1 \\ \vdots \\ a_d \end{bmatrix}
 = \begin{bmatrix} 1 \\ 3 \\ \vdots \\ 2d+1 \end{bmatrix}.
\]
We apply row reduction to this system. The crucial observation is
that since, for every $a,b \in Q_n$,
\[ P(a) - P(b) = (a-b)k_{a,b}, \]
where $k_{a,b}$ is an odd number (see the proof of
Proposition~\ref{pp_n}) and odd numbers are units in $\Z_{2^n}$
the row reduction will eventually lead to a system in which the
matrix of the system has the form~(\ref{triangular}). This system
has unique solution that can be found by back substitution.
\end{proof}

\begin{example}
Let $n=4$ and $P(x) = 5 + x + x^2$. The polynomial $P(x)$ induces a
permutation $p$ on $Q_4$. We will find the unique reduced polynomial
$R(x) = a_0 + a_1 x + a_2x^2$, with $0 \leq a_0 \leq 15$, $0 \leq
a_1 \leq 7$, and $0 \leq a_2 \leq 1$, that induces the inverse
permutation $p^{-1}$ on $Q_n$.

We calculate $p(1)=7$, $p(3)=1$ and $p(5)=3$. We then perform row
reduction (over $\Z_{16}$) on the system
\[
 \begin{bmatrix}
 1 & 7 & 1 &| & 1 \\
 1 & 1 & 1 &| & 3 \\
 1 & 3 & 9 &| & 5
 \end{bmatrix} \sim
 \begin{bmatrix}
 1 & 7  & 1 &| & 1 \\
 0 & 10 & 0 &| & 2 \\
 0 & 12 & 8 &| & 4
 \end{bmatrix} \sim
 \begin{bmatrix}
 1 & 7  & 1 &| & 1 \\
 0 & 2 & 0 &|  & 10 \\
 0 & 4 & 8 &|  & 12
 \end{bmatrix} \sim
\begin{bmatrix}
 1 & 7  & 1 &| & 1 \\
 0 & 2 & 0 &|  & 10 \\
 0 & 0 & 8 &|  & 8
 \end{bmatrix},
\]
where the third matrix is obtained from the second by re-scaling the
second row by $13=5^{-1}$ and the third row by $11=3^{-1}$. The last
system is triangular and has unique solution $a_2=1$ $a_1=5$ and
$a_0=13$. Thus $R(x) = 13 + 5x+x^2$ induces the inverse polynomial
function $p^{-1}$.
\end{example}

\begin{theorem}
There exists an algorithm of polynomial complexity in $n+m$ that,
given a polynomial $P(x) \in \P_n$ of degree $m$ (with coefficients reduced modulo $2^n$), produces the
multiplicative inverse $\frac{1}{P(x)}$ in reduced form.
\end{theorem}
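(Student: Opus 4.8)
The final theorem asserts: there exists an algorithm of polynomial complexity in $n+m$ that, given a polynomial $P(x) \in \P_n$ of degree $m$ (coefficients reduced modulo $2^n$), produces the multiplicative inverse $\frac{1}{P(x)}$ in reduced form.

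The plan is to reduce the problem of multiplicative inversion in $\RP_n$ to a root-finding problem and then invoke Hensel lifting, exactly as was done in Section~\ref{s:group} for inverting a single unit $a \in Q_n$. First I would observe that, by Theorem~\ref{interpolation}, it suffices to compute the values $q(1), q(3), \dots, q(2d_n+1)$ of the multiplicative-inverse polynomial function $q$, since from these values the reduced polynomial can be recovered in polynomial time; and since $d_n$ is linearly bounded in $n$ by Proposition~\ref{d_n bound}, there are only polynomially many such values to compute. For each fixed $a \in \{1, 3, \dots, 2d_n+1\}$, the value $q(a)$ is by definition the unique element $y \in Q_n$ such that $p(a) \cdot y \equiv 1 \pmod{2^n}$; equivalently, $q(a) = (p(a))^{-1}$ in the group $Q_n$.

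So the algorithm proceeds in two stages. Stage one: for each of the $d_n+1$ arguments $a$, evaluate $p(a) = P(a) \bmod 2^n$ — this is a Horner evaluation of a degree-$m$ polynomial modulo $2^n$, costing polynomially many bit operations in $n+m$ — and then compute the inverse $(p(a))^{-1}$ in $\Z_{2^n}$ by running the Hensel-lifting root-finding procedure of Section~\ref{s:group} on the linear polynomial $p(a) x - 1$. As noted there, this lifting has no branching (the polynomial $p(a)x-1$ has a unique root modulo $2^n$), so it runs in polynomial time in $n$, producing $q(a)$ bit-by-bit. Stage two: feed the computed values $q(1), q(3), \dots, q(2d_n+1)$ into the interpolation algorithm of Theorem~\ref{interpolation}, which outputs the unique reduced polynomial $R(x) \in \RP_n$ inducing the function $q$. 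Since $q(a) = (p(a))^{-1}$ for every $a \in Q_n$ (not just the sampled ones), this $R(x)$ is precisely the representative of $\frac{1}{P(x)}$ in $\RP_n$, because the product $R(x)P(x)$ induces the constant function $1$ on $Q_n$ and hence is functionally equivalent to the polynomial $1$.

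I do not expect a serious obstacle here — the proof is essentially an assembly of previously established pieces. The one point requiring a word of care is the complexity bookkeeping: one must confirm that evaluating $P$ modulo $2^n$ at $O(n)$ points, each via Horner's rule on $m$ coefficients of bit-length $n$, together with $O(n)$ Hensel lifts each of $n$ steps, and a final row reduction on a $(d_n+1)\times(d_n+1)$ system over $\Z_{2^n}$, all stays polynomial in $n+m$ — which it plainly does, since every ingredient was already certified polynomial in the relevant parameters in the cited results. The proof can therefore be kept to a short paragraph invoking Theorem~\ref{interpolation}, Proposition~\ref{d_n bound}, the Hensel-lifting discussion, and the definition of $\frac{1}{P(x)}$ as the reduced representative of the functional inverse under multiplication.
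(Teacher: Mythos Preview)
Your proposal is correct and follows essentially the same approach as the paper's proof: evaluate $p$ at $1,3,\dots,2d_n+1$, invert each value in $Q_n$, and interpolate via Theorem~\ref{interpolation}. The paper states this in three lines without spelling out the complexity bookkeeping or the Hensel-lifting step for the pointwise inverses, but the strategy is identical.
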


\begin{proof}
To calculate the reduced polynomial $S(x) = \frac{1}{P(x)}$ it
suffices to calculate $p(x)$ for $x=1,3,\dots,2d_n+1$, then
calculate the multiplicative inverses $s(x)=\frac{1}{p(x)}$, for
$x=1,3,\dots,2d_n+1$, and finally use Theorem~\ref{interpolation} to
find the coefficients of $S(x)$.
\end{proof}

\section{Huge quasigroups defined by polynomial
functions}\label{s:quasigroups}

A $k$-groupoid $(k\ge 1)$ is an algebra $(Q,f)$ on a nonempty set
$Q$ as its universe and with one $k$-ary operation $f:Q^k\to Q$.

\begin{definition}
A $k$-groupoid $(Q,f)$ is said to be a \emph{$k$-quasigroup} if any
$k$ out of any $k+1$ elements $a_1,a_2,\dots,a_{k+1}\in Q$ satisfying the
equality
\[
 f(a_1,a_2,\dots,a_k)=a_{k+1}
\]
uniquely determine the remaining one. 

A $k$-groupoid is said to be a
\emph{cancellative} $k$-groupoid if it satisfies the cancellation
law
\[
 f(a_1,\dots,a_{i-1},x,a_{i+1},\dots,a_k)=f(a_1,\dots,a_{i-1},y,a_{i+1},\dots,a_k)\Rightarrow
x=y,
\]
for each $i=1,\dots,k$ and all
$x,y,a_1,\dots,a_{i-1},a_{i+1},\dots,a_k$ in $Q$.
\end{definition}

For $k=2$ we obtain the standard notion of a quasigroup. 

The definition of a $k$-quasigroup immediately implies the
following. Let $(Q,f)$ be a finite $k$-quasigroup and let the map
$\varphi:Q\to Q$ be defined by
$\varphi(x)=f(a_1,\dots,a_{i-1},x,a_{i+1},\dots,a_k)$, for some
fixed $a_1,\dots,a_{i-1},a_{i+1},\dots,a_k$ in $Q$. Then $\varphi$
is a permutation on $Q$.

Here we consider only finite $k$-quasigroups $(Q,f)$, i.e., $Q$ is a
finite set, and in this case we have the following property
(\cite{smile}).

\begin{proposition}
The following statements are equivalent for a finite $k$-groupoid
$(Q,f)$:

(a) \quad $(Q,f)$ is a $k$-quasigroup.

(b) \quad $(Q,f)$ is a cancellative $k$-groupoid. \label{uslov za
kvaz}
\end{proposition}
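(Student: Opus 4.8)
The plan is to prove the equivalence of (a) and (b) in Proposition~\ref{uslov za kvaz} by showing, for a finite $k$-groupoid $(Q,f)$, that cancellativity in each coordinate is equivalent to the property that fixing any $k$ of the $k+1$ entries of the relation $f(a_1,\dots,a_k)=a_{k+1}$ determines the last uniquely. The direction (a)$\Rightarrow$(b) is immediate: if $(Q,f)$ is a $k$-quasigroup and $f(a_1,\dots,a_{i-1},x,a_{i+1},\dots,a_k) = f(a_1,\dots,a_{i-1},y,a_{i+1},\dots,a_k)$, then both $x$ and $y$ serve as the missing $i$-th coordinate compatible with the common value $a_{k+1}$ and the fixed entries, so uniqueness forces $x=y$; this works with no finiteness assumption.

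For (b)$\Rightarrow$(a), the substantive direction, I would argue coordinate by coordinate. When the missing element is $a_{k+1}$ (the "output"), existence and uniqueness are automatic since $f$ is a function. When the missing element is one of the inputs, say the $i$-th, consider the map $\varphi:Q\to Q$ given by $\varphi(x)=f(a_1,\dots,a_{i-1},x,a_{i+1},\dots,a_k)$ for the fixed entries. Cancellativity in the $i$-th coordinate says precisely that $\varphi$ is injective. Here is where finiteness enters: an injective self-map of a finite set is a bijection, so $\varphi$ is onto, and therefore for the prescribed value $a_{k+1}$ there is exactly one $x$ with $\varphi(x)=a_{k+1}$. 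This gives both existence and uniqueness of the missing $i$-th entry, completing the verification that $(Q,f)$ is a $k$-quasigroup.

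The only real obstacle is a bookkeeping one: the definition of $k$-quasigroup as stated asks that any $k$ of the $k+1$ elements "uniquely determine the remaining one", which I read as asserting both that such a completing element exists and that it is unique; one must be slightly careful to extract existence (not just uniqueness) from cancellativity, and that is exactly the step that consumes finiteness via the pigeonhole principle. Apart from that, the proof is a short case analysis on which coordinate is missing, with the input-coordinate cases all handled uniformly by the injective-implies-bijective argument on the finite set $Q$. I would simply cite \cite{smile} for the statement and present this as a brief self-contained argument.
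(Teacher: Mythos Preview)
Your argument is correct and is the standard one: (a)$\Rightarrow$(b) is immediate from the uniqueness clause in the definition of a $k$-quasigroup, and (b)$\Rightarrow$(a) follows because cancellativity makes each translation map $\varphi(x)=f(a_1,\dots,a_{i-1},x,a_{i+1},\dots,a_k)$ injective, hence bijective on the finite set $Q$, which supplies both existence and uniqueness of the missing entry. Your remark that finiteness is consumed precisely at the pigeonhole step is also on point.

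As for comparison with the paper: the paper does not prove this proposition at all. It merely states the result and attributes it to~\cite{smile}. So there is no ``paper's own proof'' to compare against; your self-contained argument is a perfectly acceptable replacement for the bare citation.
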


Given a $k$-quasigroup $(Q,f)$ we can define $k$ new $k$-ary
operations $f_i, \ i=1,2,\dots, k,$ by
\[
 f_i(a_1,\dots,a_k)=b\ \Longleftrightarrow \
 f(a_1,\dots,a_{i-1},b,a_{i+1},\dots,a_k)=a_i.
\]
These operations are called adjoint operations of
$f$. Then $(Q,f_i)$ are $k$-quasigroups as well (\cite{belousov}).

\begin{definition}
A \emph{huge} $k$-quasigroup is said to be a $k$-quasigroup $(Q,f)$
such that all of the operations $f,f_1,f_2,\dots,f_k$ can be
computed with complexity ${\mathcal O}(\log(|Q|)^{\alpha})$ for some
constant $\alpha$.
\end{definition}

The problem of effective constructions of quasigroups of any order
can be solved, for example, by using P. Hall's algorithm for
choosing different representatives for a family of sets. The
algorithm is of complexity ${\mathcal O}(n^3)$, where $n$ is the
order of the quasigroup, and is not applicable for, let say,
$n=2^{16}$. We will show here how the permutational polynomial
functions from $\PF_n$ can be used in order to construct families of
huge quasigroups on the sets $Q_n$ and $\Z_{2^n}$.

\begin{theorem}\label{quasi-on-Q}
Let $p_1,p_2,\dots,p_k$ be permutations in $\PPF_n$. Define a
$k$-ary operation $f$ on $Q_n$ by
\begin{equation}\label{*}
 f(a_1,a_2,\dots,a_k)=p_1(a_1)p_2(a_2)\cdots p_k(a_k) \pmod{2^n}.
\end{equation}
Then the $k$-groupoid $(Q_n,f)$ is a huge quasigroup.
\end{theorem}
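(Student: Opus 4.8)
The plan is to verify the defining property of a $k$-quasigroup directly from the formula~\eqref{*}, using the fact that each $p_i$ is a permutation of $Q_n$ and that multiplication makes $Q_n$ a group. First I would observe that $f$ is well-defined as a $k$-ary operation on $Q_n$: since each $p_i$ maps $Q_n$ into $Q_n$ (as $p_i \in \PF_n$) and $Q_n$ is closed under multiplication modulo $2^n$, the value $p_1(a_1)\cdots p_k(a_k)$ lies in $Q_n$. Then, to check the quasigroup property, suppose $k$ of the $k+1$ elements $a_1,\dots,a_k,a_{k+1}$ satisfying $f(a_1,\dots,a_k)=a_{k+1}$ are given and one is unknown. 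If the unknown is $a_{k+1}$, it is clearly determined. If the unknown is some $a_j$ with $j \leq k$, then, writing the equation as
\[
 p_j(a_j) = p_1(a_1)^{-1}\cdots p_{j-1}(a_{j-1})^{-1}\, a_{k+1}\, p_{j+1}(a_{j+1})^{-1}\cdots p_k(a_k)^{-1},
\]
where the inverses are taken in the group $Q_n$, the right-hand side is a fixed element of $Q_n$, and since $p_j$ is a permutation of $Q_n$, there is a unique $a_j \in Q_n$ realizing it. Hence $(Q_n,f)$ is a $k$-quasigroup.

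Next I would address the ``huge'' part, i.e., that $f$ and all its adjoint operations $f_1,\dots,f_k$ can be computed in time polynomial in $\log|Q_n| = n-1$. For $f$ itself this is immediate: each $p_i$ is a polynomial function represented (by Theorem~\ref{t:unique} and Proposition~\ref{d_n bound}) by a reduced polynomial of degree $O(n)$ with coefficients in $\Z_{2^n}$, so each $p_i(a_i)$ is evaluated in polynomially many arithmetic operations modulo $2^n$, and the product of the $k$ resulting units is computed in $O(k)$ further multiplications. For the adjoint operation $f_j$, solving $f(a_1,\dots,a_{j-1},b,a_{j+1},\dots,a_k)=a_j$ for $b$ amounts to: (1) compute the known product $c = \prod_{i\neq j} p_i(a_i)$ in $Q_n$; (2) compute $c^{-1}$ in $Q_n$ — here I invoke the Hensel lifting procedure of Section~\ref{s:group}, which finds the inverse of a unit modulo $2^n$ in polynomial time in $n$ (the unique root of $cx-1$); (3) form $a_j \cdot c^{-1} \in Q_n$, which must equal $p_j(b)$; (4) recover $b = p_j^{-1}(a_j c^{-1})$ by using Theorem~\ref{inverse perm}, which produces the reduced polynomial inducing $p_j^{-1}$ in polynomial time, after which one more polynomial evaluation yields $b$. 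Each of these steps has complexity polynomial in $n$, hence in $\log|Q_n|$.

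I expect the main obstacle, such as it is, to be purely expository rather than mathematical: one must be careful to distinguish the arithmetic inverse of a unit modulo $2^n$ (needed to isolate $p_j(b)$) from the functional inverse of the permutation $p_j$ (needed to pass from $p_j(b)$ to $b$), and to point precisely to the earlier results that make each of these effective — Hensel lifting for the former and Theorem~\ref{inverse perm} for the latter. A secondary point worth stating explicitly is that the reduced polynomials and their degrees are bounded by $O(n)$ (Proposition~\ref{d_n bound}), so that a single evaluation of any $p_i$ or $p_i^{-1}$ is itself polynomial in $n$; without this remark the complexity claim would not be self-contained. Otherwise the argument is a direct assembly of Proposition~\ref{uslov za kvaz}, the group structure of $Q_n$, and the algorithmic results of Section~\ref{s:algorithmic_aspects}.
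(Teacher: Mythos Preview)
Your proposal is correct and follows essentially the same route as the paper: both establish the quasigroup property by solving for the unknown coordinate via multiplicative inversion in $Q_n$ followed by application of $p_j^{-1}$, and both justify the complexity bound by invoking Proposition~\ref{d_n bound} for the degree, Hensel lifting (Section~\ref{s:group}) for inverses in $Q_n$, and Theorem~\ref{inverse perm} for the functional inverse. Your version is slightly more explicit in separating the verification that $(Q_n,f)$ is a $k$-quasigroup from the complexity analysis, whereas the paper folds the former into its derivation of the adjoint operations, but the substance is the same.
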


\begin{proof}\ Let $r=2^n$. The permutations in $\PPF_n$ are defined by
polynomials $P(x)$ of degree smaller than $(\log_2 r+1+\lfloor
\log_2(\log_2 r)\rfloor)/2$ (by Proposition~\ref{d_n bound}). Then
the evaluation of $P(x)$ modulo $2^n$ can be computed in polynomial
complexity with respect to $\log_2 r$. Consequently, the function
$f$ defined by (\ref{*}) can be computed in polynomial complexity
with respect to $\log_2 r$.

Consider now the adjoint operations $f_i$ of $f$. We have, for any
$a_1,a_2,\dots,a_k,b\in Q_n$:
\begin{alignat*}{2}
 &&&f_i(a_1,a_2,\dots,a_k)=b\ \Longleftrightarrow\\
 &&&\quad\Longleftrightarrow\ f(a_1,\dots,a_{i-1},b,a_{i+1},\dots,a_k)=a_i\\
 &&&\quad\Longleftrightarrow\ p_1(a_1)\cdots p_{i-1}(a_{i-1})p_i(b)p_{i+1}a_{i+1}\cdots p_k(a_k)=a_i\\
 &&&\quad\Longleftrightarrow\ p_i(b)=(p_{i-1}(a_{i-1}))^{-1}\cdots (p_{1}(a_{1}))^{-1}a_i(p_{k}a_{k})^{-1}\cdots (p_{i+1}(a_{i+1}))^{-1}\\
 &&&\quad\Longleftrightarrow\ b=p_i^{-1}((p_{i-1}(a_{i-1}))^{-1}\cdots (p_{1}(a_{1}))^{-1}a_i(p_{k}a_{k})^{-1}\cdots (p_{i+1}(a_{i+1}))^{-1})
\end{alignat*}
By using the Hensel lifting technique the inverse elements
$(p_{j}(a_{j}))^{-1}$ can be computed in polynomial complexity with
respect to $\log_2 r$ (see Section\ref{s:group}), and the same is
true for the inverse permutation $p_i^{-1}$ by Theorem~\ref{inverse
perm}.
\end{proof}

\begin{theorem}
Let $p_1,p_2,\dots,p_k$ be permutations in $\PPF_n$. Define a
$k$-ary operation $f$ on $\Z_{2^n}$ by
\begin{equation}
 f(a_1,a_2,\dots,a_k)=\hat{p_1}(a_1)+\hat{p_2}(a_2)+\cdots+\hat{ p_k}(a_k) \pmod{2^n}.
\end{equation}
where $\hat{p_i}$ are defined by (\ref{p kapa}). Then the
$k$-groupoid $(Q_n,f)$ is a huge quasigroup.
\end{theorem}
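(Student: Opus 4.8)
The plan is to mimic the structure of the proof of the preceding theorem (the one concerning the operation $f(a_1,\dots,a_k)=\hat{p_1}(a_1)\cdots\hat{p_k}(a_k)$ on $Q_n$), replacing the multiplicative group $(Q_n,\cdot)$ by the additive group $(\Z_{2^n},+)$. First I would observe that each $\hat{p_i}$ is, by construction in \eqref{p kapa}, a permutation of $\Z_{2^n}$, so that $f(a_1,\dots,a_k)=\hat{p_1}(a_1)+\dots+\hat{p_k}(a_k)\pmod{2^n}$ has the same shape as a ``group word'' operation. To see $(\Z_{2^n},f)$ is a $k$-quasigroup it suffices, by Proposition~\ref{uslov za kvaz}, to check cancellativity: fixing all arguments but the $i$-th, the map $x\mapsto f(a_1,\dots,a_{i-1},x,a_{i+1},\dots,a_k)$ equals $\hat{p_i}(x)+c$ for a constant $c\in\Z_{2^n}$, which is a composition of the permutation $\hat{p_i}$ with a translation, hence a permutation of $\Z_{2^n}$; therefore it is injective and cancellation holds in each coordinate.

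Next I would identify the adjoint operations explicitly, exactly as in the previous proof. From $f(a_1,\dots,a_{i-1},b,a_{i+1},\dots,a_k)=a_i$ we solve to get
\[
 b = \hat{p_i}^{-1}\!\left(a_i - \sum_{j\neq i}\hat{p_j}(a_j)\right),
\]
so $f_i(a_1,\dots,a_k)=\hat{p_i}^{-1}\bigl(a_i-\sum_{j\neq i}\hat{p_j}(a_j)\bigr)$, the subtraction being taken modulo $2^n$. Thus both $f$ and every $f_i$ are obtained by composing additions modulo $2^n$, the permutations $\hat{p_j}$, and one inverse permutation $\hat{p_i}^{-1}$.

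The remaining point, and the only one requiring a little care, is the complexity claim: all of $f,f_1,\dots,f_k$ must be computable in time $\mathcal{O}(\log(|Q_n|)^\alpha)=\mathcal{O}((\log 2^n)^\alpha)=\mathcal{O}(n^\alpha)$. Writing $r=2^n$, additions modulo $2^n$ cost $\mathcal{O}(n)$. Each $\hat{p_j}$ evaluates, by \eqref{p kapa}, either $p_j$ on $Q_n$ or $p_j'(x)=p_j(x+1)-1$ on $Q_n'$; deciding which case applies amounts to testing the parity of $x$, and in either case one evaluates a polynomial of degree less than $(n+1+\lfloor\log_2 n\rfloor)/2$ modulo $2^n$ (Proposition~\ref{d_n bound}), which is polynomial in $n$. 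For the inverse permutation appearing in $f_i$, one first computes the reduced polynomial inducing $p_i^{-1}$ on $Q_n$ via Theorem~\ref{inverse perm} in time polynomial in $n$; then $\hat{p_i}^{-1}$ is assembled from $p_i^{-1}$ on $Q_n$ and from $(p_i')^{-1}(y)=p_i^{-1}(y+1)-1$ on $Q_n'$ (using that $\hat{p_i}$ preserves each of $Q_n$ and $Q_n'$, as is immediate from \eqref{p kapa}), again at polynomial cost. I expect the main obstacle to be purely bookkeeping rather than conceptual: one must be careful that when the argument $a_i-\sum_{j\neq i}\hat{p_j}(a_j)$ lands in $Q_n$ versus $Q_n'$ the correct branch of $\hat{p_i}^{-1}$ is applied, i.e.\ that the conjugation $h\mapsto h'$ is compatible with inversion, which is the one spot where \eqref{p kapa} must be invoked rather than the bare permutation $p_i$. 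Everything else is a routine transcription of the multiplicative case with $\cdot$ replaced by $+$ and $Q_n$ by $\Z_{2^n}$.
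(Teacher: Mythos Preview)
Your proposal is correct and follows essentially the same route as the paper: reduce to the previous multiplicative theorem by replacing $(Q_n,\cdot)$ with $(\Z_{2^n},+)$, write the adjoint operations as $b=\hat{p_i}^{-1}\bigl(a_i-\sum_{j\neq i}\hat{p_j}(a_j)\bigr)$, and observe that the only new ingredient is the piecewise formula $\hat{p_i}^{-1}(a)=p_i^{-1}(a)$ on $Q_n$ and $\hat{p_i}^{-1}(a)=p_i^{-1}(a+1)-1$ on $Q_n'$, which is computable in time polynomial in $n$ by Theorem~\ref{inverse perm}. (One small slip: the preceding theorem uses $p_i$, not $\hat{p_i}$, in its multiplicative formula on $Q_n$.)
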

\begin{proof}
The proof is similar to the proof of Theorem~\ref{quasi-on-Q}. We
only need to note that the inverse permutation
\[
 \hat{p_i}^{-1}=
 \begin{cases}
   p_i^{-1}(a), &a\in Q_n\\
   p_i^{-1}(a+1)-1, &a\in Q_n'
 \end{cases}
\]
can be computed in polynomially complexity with respect to $\log_2
r$.
\end{proof}

\begin{theorem}
Let $p_1,p_2,\dots,p_k$ and $h_1,h_2,\dots,h_k$ be permutations in
$\PPF_n$. Define a $k$-ary operation $f$ on $\Z_{2^n}$ by
\begin{equation}
 f(a_1,a_2,\dots,a_k)=f_{p_1,h_1}(a_1)+f_{p_2,h_2}(a_2)+\cdots+f_{p_k,h_k}(a_k) \pmod{2^n}.
\end{equation}
where $f_{p_i,h_i}$ are defined by (\ref{p h kapa}). Then the
$k$-groupoid $(Q_n,f)$ is a huge quasigroup.
\end{theorem}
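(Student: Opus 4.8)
The plan is to follow the template of the proof of Theorem~\ref{quasi-on-Q} (this statement is the two-sided refinement of the preceding theorem, since $\hat p = f_{p,p}$). First I would establish that $(\Z_{2^n},f)$ is a $k$-quasigroup, and then that $f$ together with all of its adjoints $f_1,\dots,f_k$ can be computed with complexity polynomial in $n=\log_2|\Z_{2^n}|$.

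For the quasigroup property, by Proposition~\ref{uslov za kvaz} it suffices to verify cancellation in each coordinate. Fixing an index $i$ and all arguments except the $i$-th, the map $x\mapsto f(a_1,\dots,a_{i-1},x,a_{i+1},\dots,a_k)$ is the composite of the bijection $f_{p_i,h_i}$ of $\Z_{2^n}$ — which was already observed to be a permutation when~(\ref{p h kapa}) was introduced, being $p_i$ on $Q_n$ and $h_i'$ on $Q_n'$, with $h_i'$ a permutation of $Q_n'$ because $h_i$ is a permutation of $Q_n$ — with translation by the constant $\sum_{j\neq i}f_{p_j,h_j}(a_j)$ modulo $2^n$. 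A bijection composed with a translation is a bijection, so cancellation holds in every coordinate and $(\Z_{2^n},f)$ is a $k$-quasigroup.

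For the \emph{huge} part, computing $f$ requires $k$ evaluations of the polynomial functions representing the $f_{p_i,h_i}$ — deciding whether an argument lies in $Q_n$ or in $Q_n'$ is a parity check, and the conjugation $h_i'(x)=h_i(x+1)-1$ is one further evaluation — followed by a sum modulo $2^n$; since the representing polynomials have degree linear in $n$ by Proposition~\ref{d_n bound}, this costs polynomially many steps in $n$. For an adjoint $f_i$, solving $f(a_1,\dots,b,\dots,a_k)=a_i$ for $b$ reduces to forming $c=a_i-\sum_{j\neq i}f_{p_j,h_j}(a_j)\pmod{2^n}$ and returning $b=f_{p_i,h_i}^{-1}(c)$, where $f_{p_i,h_i}^{-1}$ is $p_i^{-1}$ on $Q_n$ and the conjugate $x\mapsto h_i^{-1}(x+1)-1$ on $Q_n'$. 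By Theorem~\ref{inverse perm} the reduced polynomials inducing $p_i^{-1}$ and $h_i^{-1}$ can be precomputed in time polynomial in $n$, after which each evaluation of $f_{p_i,h_i}^{-1}$ (again gated by a parity check to select the branch) is polynomial in $n$. Hence all of $f,f_1,\dots,f_k$ have complexity ${\mathcal O}(n^{\alpha})$ for a suitable constant $\alpha$, and $(\Z_{2^n},f)$ is a huge $k$-quasigroup.

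The only point needing a little care — hardly an obstacle — is the bookkeeping for $f_{p_i,h_i}^{-1}$: one must confirm that $f_{p_i,h_i}$ carries $Q_n$ bijectively onto $Q_n$ and $Q_n'$ bijectively onto $Q_n'$, so that its inverse splits as claimed, and that $(h_i')^{-1}(x)=h_i^{-1}(x+1)-1$. Both facts are immediate from the definition of $h_i'$ and from the fact that $x\mapsto x+1$ is a bijection $Q_n'\to Q_n$, so the proof is, as with the two preceding theorems, largely a matter of assembling machinery already in place.
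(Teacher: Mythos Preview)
Your proof is correct. The paper does not actually supply a proof for this theorem; it is stated immediately after the preceding theorem (the $\hat p_i$ version) and left unproved, the clear implication being that it follows by the same argument with $f_{p_i,h_i}$ in place of $\hat p_i$. Your write-up fills in precisely those details along the expected lines: cancellation via Proposition~\ref{uslov za kvaz} using that each $f_{p_i,h_i}$ is a bijection of $\Z_{2^n}$, and polynomial complexity for $f$ and its adjoints by reducing the inversion of $f_{p_i,h_i}$ to a parity check followed by an application of $p_i^{-1}$ or $h_i^{-1}$, both available via Theorem~\ref{inverse perm}. You also correctly read the underlying set as $\Z_{2^n}$ rather than $Q_n$ (the latter is a typo inherited from the preceding theorem).
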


We note that Rivest~\cite{rivest} gives a simple necessary and
sufficient condition for a bivariate polynomial $P(x,y)$ modulo
$2^n$ to represent a quasigroup on $\Z_{2^n}$, namely $P(x,0),\
P(x,1),\ P(0,y)$ and $P(1,y)$ should be univariate permutational
polynomials on $\Z_{2^n}$. This result is based on his main result
in~\cite{rivest} (see Theorem~\ref{t:rivest} in Section~\ref{s:rivest}).

\end{document}